\documentclass[12pt,a4paper]{article}
\usepackage[T1]{fontenc}
\usepackage[utf8]{inputenc}
\input{glyphtounicode}
\usepackage{lmodern}
\usepackage{amsmath,amssymb,amsthm,mathtools,mathrsfs}
\usepackage[width=15.6cm,height=23.4cm,centering]{geometry}
\usepackage{microtype}
\usepackage{booktabs,array}
\usepackage[hidelinks]{hyperref}
\hypersetup{pdftitle={Billiards, Refraction, and Blow-Up for Kirchhoff Equations},pdfsubject={Transverse heteroclinic connections and forced blow-up for Kirchhoff equations},pdfkeywords={Kirchhoff equation, billiards, refraction, heteroclinic connection, blow-up}}
\usepackage{fancyhdr}
\usepackage{enumitem}
\setlist{itemsep=3pt,topsep=5pt}
\numberwithin{equation}{section}
\theoremstyle{plain}
\newtheorem{theorem}{Theorem}[section]
\newtheorem{proposition}[theorem]{Proposition}
\newtheorem{lemma}[theorem]{Lemma}
\newtheorem{corollary}[theorem]{Corollary}
\theoremstyle{definition}

\theoremstyle{remark}
\newtheorem{remark}[theorem]{Remark}

\newcommand{\Z}{\mathbb Z}
\newcommand{\eps}{\varepsilon}
\newcommand{\dd}{\,d}
\newcommand{\Dnu}{D_\nu}
\newcommand{\Psinu}{\Psi_\nu}
\newcommand{\nn}{\mathbf n}
\newcommand{\ttt}{\mathbf t}
\newcommand{\er}{e_r}
\newcommand{\et}{e_\theta}
\newcommand{\cH}{\mathscr H}
\newcommand{\cB}{\mathcal B}
\DeclareMathOperator{\diag}{diag}

\DeclareMathOperator{\supp}{supp}

\title{Billiards, Refraction, and Blow-Up for Kirchhoff Equations}
\author{Marina Ghisi\vspace{1ex}\\ 
{\normalsize Università degli Studi di Pisa} \\
{\normalsize Dipartimento di Matematica}\\ 
{\normalsize PISA (Italy)}\\
{\normalsize e-mail: \texttt{marina.ghisi@unipi.it}}
\and
Massimo Gobbino\vspace{1ex}\\ 
{\normalsize Università degli Studi di Pisa} \\
{\normalsize Dipartimento di Matematica}\\ 
{\normalsize PISA (Italy)}\\  
{\normalsize e-mail: \texttt{massimo.gobbino@unipi.it}}
}
\date{\today}
\begin{document}
\maketitle
\thispagestyle{empty}
\begin{abstract}

We start from a simple problem in geometric optics. A particle moves between two homothetic ellipses, with refraction at the inner interface and reflection at the outer one. In a suitable nearly circular regime, the small geometric anisotropy of the ellipses is amplified by the strong refraction, and the corresponding return map develops a transverse heteroclinic connection between the two axial motions.

We turn this geometric mechanism into a construction for a two-mode Kirchhoff system while keeping the standard quadratic elastic variable throughout. The singular optical model is first reduced to an explicit kick--drift map, for which the heteroclinic connection is obtained by a contraction argument. The same argument also gives exponential convergence along the two tails and transversality. We then show that the connection persists for the genuine elliptic billiard and, subsequently, through a smooth regularization of the reflecting and refracting interfaces. Finally, a small positive background is added to the Kirchhoff coefficient, making it uniformly positive without destroying the hyperbolicity of the axial modes or the transverse connection.

This produces a smooth and uniformly positive Kirchhoff coefficient admitting a transverse heteroclinic orbit between two simple modes. Combined with the road-map theorem developed in our previous work, the construction yields a finite-time blow-up example for a forced abstract Kirchhoff equation with a regular forcing term.

\end{abstract}

\section{Introduction and main result}\label{sec:intro}

Let $m:[0,\infty)\to(0,\infty)$ and $\nu>1$.  We study
\begin{equation}\label{eq:kids}
\begin{cases}
 x''+m(x^2+\nu y^2)x=0,\\
 y''+\nu m(x^2+\nu y^2)y=0.
\end{cases}
\end{equation}
Writing
\[
 \Psinu(x,y)=x^2+\nu y^2,
 \qquad \Dnu=\diag(1,\nu),
 \qquad M(s)=\int_0^s m(\sigma)\dd\sigma,
\]
the system is Hamiltonian with
\begin{equation}\label{eq:hamiltonian}
 \cH(R,V)=\frac12|V|^2+\frac12M(\Psinu(R)),
 \qquad R=(x,y),\quad V=(x',y').
\end{equation}
The kinetic energy is Euclidean.  Both coordinate axes are invariant, and
the corresponding nonconstant periodic solutions are called simple modes.
In all statements below, hyperbolicity and transversality refer to the
Poincare dynamics on a fixed energy surface.  We do not use hyperbolicity
in the full four-dimensional phase space, where the autonomous
Hamiltonian structure supplies the usual neutral directions.

The connection sought here should not be confused with an orbit that
leaves an axis at a finite time. If both the position and the velocity of
one component vanish at a finite time, uniqueness keeps that component
identically zero. A heteroclinic orbit instead has two genuinely active
components during its transition and approaches different axial periodic
orbits only as $t\to\pm\infty$. The estimates in the main theorem measure
the decay of the component that becomes inactive at each end.

For clarity, a \emph{transverse heteroclinic connection} on an energy
surface means an orbit contained in
$W^u(\Gamma_x)\cap W^s(\Gamma_y)$ whose two invariant manifolds span the
tangent space of that energy surface along the orbit. On a transverse
section this is simply an intersection of two curves with distinct tangent
lines. It is this two-dimensional formulation that will be used below.

Instability of simple modes for suitable Kirchhoff nonlinearities was
studied, among other works, in \cite{GGunstable}.  The existence of an
unstable simple mode does not by itself imply a connection to a different
one.  The road map in \cite{GGroad} isolates precisely such a connection
as a sufficient input for constructing a forced infinite-dimensional
blow-up example.  Our task here is the finite-dimensional dynamical input.

The construction has two small-scale stages.  First, a nearly circular
geometry is combined with a much smaller speed in the annulus.  A small
anisotropic deflection then becomes a finite kick after normalization by
the annular speed.  Second, for a fixed member of this family, the sharp
interfaces are replaced by thin smooth layers.  These two limits are
\emph{ordered}, not simultaneous.

There are three distinct tasks in the proof. We first derive a limiting
collision map whose formula is explicit. We then construct an orbit of
that map by a contraction, rather than by numerical shooting. Finally,
we use transversality to replace both idealizations---the limiting
geometry and the sharp interfaces---by genuine smooth dynamics.
The same planar quadratic form $x^2+\nu y^2$ is used at every stage.

Define the smooth step function
\begin{equation}\label{eq:chi}
 B(t)=\begin{cases}e^{-1/t},&t>0,\\0,&t\le0,\end{cases}
 \qquad
 \chi(z)=\frac{B(z+1)}{B(z+1)+B(1-z)}.
\end{equation}
Then $\chi=0$ on $(-\infty,-1]$, $\chi=1$ on $[1,\infty)$, and
$\chi'>0$ on $(-1,1)$.

\begin{theorem}[Smooth, strictly positive Kirchhoff realization]\label{thm:main}
There exists $\eps_0\in(0,1)$ with the following property.  For every
$0<\eps<\eps_0$, set
\begin{equation}\label{eq:physicalfamily}
 \nu=1+\eps,\qquad a=\frac12,\qquad b=1,
 \qquad E=\frac12,\qquad h=\frac12-\frac{\eps^2}{512}.
\end{equation}
There exists $\delta_0(\eps)>0$ such that, for every
$0<\delta<\delta_0(\eps)$, there exists
$\eta_0(\eps,\delta)>0$ for which the following holds whenever
$0<\eta<\eta_0(\eps,\delta)$.
The profile
\begin{equation}\label{eq:mfinal}
 m(s)=\eta+
 \frac h\delta\chi'\!\left(\frac{s-a^2}{\delta}\right)
 +\frac{1-h}{\delta}\chi'\!\left(\frac{s-b^2}{\delta}\right)
\end{equation}
is smooth, bounded, and satisfies $\inf_{s\ge0}m(s)\ge\eta>0$.
On the energy surface $\cH=E/2=1/4$, system \eqref{eq:kids} has two
hyperbolic axial periodic orbits $\Gamma_x$ and $\Gamma_y$ and a
transverse heteroclinic orbit from $\Gamma_x$ to $\Gamma_y$.

For a parametrization $(x(t),y(t))$ of this orbit there are constants
$A_0,B_0>0$ such that
\begin{equation}\label{eq:hca-decay}
\begin{aligned}
 |y'(t)|^2+\nu|y(t)|^2&\le B_0 e^{-A_0|t|},&&t\le0,\\
 |x'(t)|^2+|x(t)|^2&\le B_0 e^{-A_0t},&&t\ge0.
\end{aligned}
\end{equation}
The same system also has a transverse connection in the reverse direction.
\end{theorem}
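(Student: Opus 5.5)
The proof follows the three stages announced in the introduction, ordered as $\eps$, then $\delta$, then $\eta$.

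\textbf{Stage 1: the singular optical model.} In the limit $\delta\to0$, $\eta=0$, the potential $\frac12M(\Psinu(R))$ is piecewise constant. It equals $0$ inside the ellipse $\Psinu=a^2$, equals $h/2$ in the elliptic annulus $a^2<\Psinu<b^2$, and equals $1/2$ outside. On the energy level $\cH=1/4$ the particle therefore moves freely with speed $1/\sqrt2$ in the inner region. In the annulus its speed is $\sqrt{1/2-h}=\eps/16$, which is very small. At the inner ellipse it refracts with conservation of the tangential momentum, and at the outer ellipse it reflects, since $1/2>1/4$.

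The two axes give periodic bouncing orbits $\Gamma_x,\Gamma_y$. I would write the return map on a section, namely the crossings of the inner ellipse, in coordinates (angle, tangential momentum) rescaled by $\eps$. In the nearly circular regime the map becomes a kick--drift map as $\eps\to0$. The drift is the rotation accumulated in the slow annulus, which is of order one after the scaling, since the time spent there is about $1/\eps$. The kick is the anisotropic deflection, of order $\eps$, divided by the annular speed, so it is also of order one. The result is an explicit limit map $F_0$, in the spirit of a standard map, with period-two hyperbolic points at angles $0$ and $\pi/2$. These correspond to the axial modes.

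\textbf{Stage 2: the heteroclinic orbit of the limit map.} I would build the heteroclinic orbit of $F_0$ by a contraction argument in a weighted sequence space $\ell^\infty_w(\Z)$ with exponential weights. Take an approximate orbit that follows the $x$-axis fixed point for negative indices and the $y$-axis fixed point for positive ones, with an explicit transition in between. Then solve $F_0(z_n)=z_{n+1}$ by Newton iteration. The linearized difference operator is invertible because the endpoints are hyperbolic, and the explicit formulas give the bounds needed for contraction.

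The weighted space gives exponential convergence along both tails, which later yields \eqref{eq:hca-decay}. Invertibility of the linearization along the orbit is equivalent to transversality of $W^u$ and $W^s$ on the section. The reverse connection follows from the time-reversal symmetry $V\mapsto -V$, composed with the reflection symmetry of the ellipses.

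\textbf{Stage 3: persistence.} The transverse intersection of two curves is an open condition under $C^1$-small perturbations of the map on compact pieces of the manifolds, and hyperbolic periodic points persist.
\begin{enumerate}[label=(\roman*)]
\item For small fixed $\eps$, the genuine elliptic refraction billiard map is $C^1$-close to $F_0$ in the rescaled variables. This gives $\eps_0$.
\item For fixed $\eps$, replacing the jumps by the $\chi$-layers of width $\delta$ changes the flow only inside thin layers. There the motion is a one-dimensional crossing in the normal variable $s=\Psinu$, governed by the smooth potential. The transit map through the layer converges in $C^1$ to refraction or reflection, provided the trajectories hit the interfaces at angles bounded away from grazing and away from the critical angle. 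The orbit constructed above has these non-degeneracy properties.
\item Adding $\eta s$ to $M$ is a $C^1$-small perturbation of the flow on bounded sets. Hyperbolicity and transversality survive for $\eta<\eta_0(\eps,\delta)$.
\end{enumerate}
The exponential decay in \eqref{eq:hca-decay} then transfers to the flow. Along each tail, the inactive component has size controlled by the distance to the periodic orbit on the section. Return times are bounded, so $\lambda^{n}$ decay on the section becomes $e^{-A_0|t|}$ decay in time.

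\textbf{Main obstacle.} I expect the hardest step to be the uniform $C^1$ convergence of the layer transit maps in Stage 3(ii). Near the refracting interface the annular speed $\eps/16$ is small, so refraction bends trajectories strongly. One must check that the orbit stays uniformly away from total internal reflection, and that the derivative of the layer passage time stays controlled. My first attempt would be to integrate the normal motion exactly through the conserved tangential momentum, which reduces it to a quadrature in $s$. I would then estimate the derivative of this quadrature with respect to the incoming data, uniformly as $\delta\to0$.
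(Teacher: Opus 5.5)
\textbf{There is a genuine gap in Stage~2, which carries the real content of the proof.} You assert that the linearized difference operator is invertible ``because the endpoints are hyperbolic.'' That implication is false. Hyperbolicity of $X$ and $Y$ only gives exponential dichotomies on the two half-lines, so the operator is Fredholm of index zero on bounded sequences. It is invertible exactly when $W^u(X)$ and $W^s(Y)$ are transverse along the orbit. That is the statement to be proved, and it fails whenever the two curves are tangent or coincide. Endpoint hyperbolicity also says nothing about whether a connection exists at all. Nor does it tell you whether your unspecified ``explicit transition'' has a residual small enough for a Newton--Kantorovich argument, which needs a quantitative bound on the inverse through the transition region. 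You never derive $F_0$, and you never check hyperbolicity for the actual values $\alpha=a/b=1/2$, $\gamma=1/16$; the paper finds traces $-30$ and $34$. You also never control the orbit's distance from grazing and critical transmission. So the later claim in Stage~3 that ``the orbit constructed above has these non-degeneracy properties'' is unsupported as well.

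\textbf{The missing mechanism is the strong kick.} In the paper the limit map is $p_+=p-16\sin(2\theta)$, $\theta_+=\theta+g(p_+)$, taken mod $\pi$ after antipodal reduction, with $g(w)=2(\arcsin w-\arcsin(w/2))$. Eliminate $p$ through the inverse drift $P=g^{-1}$, which is defined on $(-2\pi/3,2\pi/3)$ with $|P|<1$ and $0<P'\le1$. This gives the recurrence $\sin(2\theta_n)=\tfrac1{16}[P(\theta_n-\theta_{n-1})-P(\theta_{n+1}-\theta_n)]$.
\begin{itemize}
\item The step sequence equal to $0$ for $n\le0$ and $\pi/2$ for $n\ge1$ solves it when the right-hand side is dropped.
\item Its single jump fits in the drift range with margin, since all increments stay below $d_*=\pi/2+1/4<2\pi/3$.
\item The right-hand side has size at most $1/8$ and Lipschitz constant at most $1/4$.
\item Solving for the correction with an arcsine gives a contraction on an $\ell^\infty$ ball with constant $1/\sqrt{63}$. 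This yields exponential tails and the uniform margin $|p_n|\le P(d_*)<1$.
\item Along the orbit, $2|\cos(2\theta_n)|\ge\sqrt{63}/4$, while the linearized coupling is at most $1/4$. Hence the linearized recurrence has no nonzero bounded solution, which is transversality.
\end{itemize}
A Newton scheme around that step sequence could be made to work for exactly this reason, but you have to supply the reason.

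\textbf{Two smaller points.} First, the annular speed is $\eps/(16\sqrt2)$; it is the speed ratio that equals $\eps/16$. Second, your plan for the layer estimate, via a ``conserved tangential momentum'' and a quadrature in $s$, is shaky. In a curved layer with $\nu\ne1$, neither tangential nor angular momentum is conserved, and the reflection quadrature has a moving turning point. The paper avoids both problems by rescaling time by $\delta$ and treating $z=(\Psinu(R)-c)/\delta$ as an independent variable. Then $\delta=0$ is a regular parameter value and the exit is a transverse hitting time.
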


The primitive of \eqref{eq:mfinal}, normalized at zero, is
\begin{equation}\label{eq:Mfinal}
 M(s)=h\chi\!\left(\frac{s-a^2}{\delta}\right)
 +(1-h)\chi\!\left(\frac{s-b^2}{\delta}\right)+\eta s,
\end{equation}
provided the layers are disjoint and lie away from zero.  No angular
correction and no non-Hilbertian norm is introduced.  The function $M$ is
strictly increasing, whereas $m$ need not be monotone: it consists of two
smooth peaks on a positive background.

Theorem~\ref{thm:main} supplies the hypothesis of \cite[Definition~2.1]{GGroad} with \(\lambda=\sqrt\nu\). As a consequence, the road-map theorem yields a finite-time blow-up example for a forced abstract Kirchhoff equation, with forcing belonging to every Gevrey class of order strictly larger than one. The precise statement is given in Section~\ref{sec:road}. In this sense, the construction reaches the boundary of the classical global existence theory: quasi-analytic regularity prevents the phenomenon, whereas immediately beyond the quasi-analytic regime finite-time blow-up can already occur.

\subsection*{How the proof is organized}
The roles of the small parameters are different:
\begin{center}
\renewcommand{\arraystretch}{1.18}
\begin{tabular}{@{}p{1.8cm}p{6.45cm}p{6.35cm}@{}}
\toprule
Parameter & What is changed & What is kept fixed \\
\midrule
$\eps$ & Ellipticity and speed ratio $q=\eps/16$
 & $\alpha=1/2$ and the scaled kick strength \\
$\delta$ & Thickness of the two transition layers
 & One sufficiently small positive $\eps$ \\
$\eta$ & A positive background in $m$
 & The already regularized system, including $\delta$ \\
\bottomrule
\end{tabular}
\end{center}
Sections~\ref{sec:step}--\ref{sec:limit} explain the geometry and compute
the limit map. Section~\ref{sec:contraction} contains the contraction and
the transversality argument. Section~\ref{sec:ellipticpersistence} isolates
the persistence principle, including why only finitely many iterates need
to be compared. Sections~\ref{sec:layers}--\ref{sec:realization} replace the
interfaces by smooth layers and then make $m$ positive.
The infinite-dimensional statement in Section~\ref{sec:road} is an
application of the external road-map theorem, not another construction
hidden in the planar proof.

We use $E$ for twice the mechanical energy, $q$ for the ratio of speeds,
$p$ for a normalized tangential velocity on the section, and $n\in\Z$
for the return index. The symbol $\theta$ is the eccentric angle of an
ellipse, not its polar angle when $\eps>0$.

\paragraph{Status of this preliminary version and AI assistance.}

This is a very preliminary version of the paper, posted in order to make the construction and its main ideas available without waiting for the much slower process of a complete human verification and rewriting. A subsequent version is planned, with a line-by-line human check of all arguments, additional details, and a more extensive bibliography. This may take some time; given recent developments, by then someone may already have produced a Lean-verified proof without forcing.

The geometric ideas behind the construction long predate the present work with AI assistance. The billiard interpretation of the two-mode Kirchhoff system already appeared in our 2001 paper~\cite{GGunstable}. The possibility of introducing two potential plateaux, with refraction at the inner interface and reflection at the outer one, was considered by us much later and discussed in several talks, but we had never succeeded in completing the corresponding calculations for elliptic interfaces or in finding a satisfactory mechanism producing the desired heteroclinic connection.

The present draft was prepared with substantial assistance from OpenAI's ChatGPT, using the GPT-5.6 Sol model in September 2026. During an extended mathematical discussion, it helped recompute and check the elliptic reflection--refraction maps, helped identify the contraction argument used to construct the limiting heteroclinic orbit, and pointed us to the appropriate perturbative results in the literature needed to justify its persistence under smoothing. Remarkably, the decisive mathematical development described above emerged in less than two hours of interaction with the AI system.

The mathematical claims and responsibility for the contents of the paper remain entirely with the authors. At the time of posting this preliminary version, the overall argument and its main calculations have been examined by the authors, but a complete independent line-by-line human verification has not yet been carried out.

\section{The two-step elliptic model}\label{sec:step}

\subsection{Potential, speeds, and interface laws}
For the moment let $0<a<b$, $\nu>1$, and $0<h<E<1$ be arbitrary.
The discontinuous primitive is
\begin{equation}\label{eq:stepM}
 M_0(s)=
 \begin{cases}
 0,&0\le s<a^2,\\
 h,&a^2<s<b^2,\\
 1,&s>b^2.
 \end{cases}
\end{equation}
We work on the energy level $\cH=E/2$.  The speeds in the two accessible
regions are
\begin{equation}\label{eq:speeds}
 u=\sqrt E,\qquad w=\sqrt{E-h},\qquad
 q=\frac wu,\qquad \alpha=\frac ab.
\end{equation}
The values at the two interfaces are immaterial to the scattering rules.
Across a potential step, the tangential velocity is conserved and the
normal velocity is determined by energy.  If the jump of $M_0$ in the
outward direction is $J>0$, outward transmission obeys
\[
 (v_N^+)^2=(v_N^-)^2-J.
\]
It is allowed only when the right-hand side is positive.  The outer
interface is reflecting because $E<1$.  At the inner interface,
trajectories with insufficient normal kinetic energy may undergo total
internal reflection; we will construct an orbit uniformly separated from
that threshold.

For calculations of the collision map it is convenient to normalize both
length and inner speed.  Set
\begin{equation}\label{eq:normalization}
 \widehat R=\frac{R}{b},\qquad
 \tau=\frac{u}{b}t.
\end{equation}
Since $u^2=E$, division of the energy identity by $E$ gives
\[
 \frac12\left|\frac{d\widehat R}{d\tau}\right|^2
 +\frac12\widehat M_0\bigl(\Psi_\nu(\widehat R)\bigr)=\frac12,
 \qquad
 \widehat M_0(s):=\frac{M_0(b^2s)}{E}.
\]
Thus, in normalized variables,
\[
 \widehat b=1,\qquad \widehat a=\alpha,\qquad
 \widehat u=1,\qquad \widehat w=q,
\]
whereas the three plateau heights of the primitive become
\[
 0,\qquad \frac hE=1-q^2,\qquad \frac1E.
\]
In particular the normalized total energy is $1$, while the outer plateau
is $1/E>1$ because $E<1$; the outer reflection therefore keeps a strict
energy margin.  The collision laws depend only on the interface geometry
and on the speed ratio $q$, so from now on we suppress the hats in the
kinematic calculations and write $b=u=1$, $a=\alpha$.  The physical
normalization \eqref{eq:physicalfamily} will be restored before smoothing
the potential.

\subsection{The reduced incoming section}
An inner-boundary point is parametrized by its eccentric angle:
\begin{equation}\label{eq:ellipseparam}
 r_\eps(\theta)=a\left(\cos\theta,
              \frac{\sin\theta}{\sqrt{1+\eps}}\right).
\end{equation}
Let $\nn_\eps(\theta)$ be the outward unit normal and
$\ttt_\eps(\theta)$ the counterclockwise unit tangent.  We take the section
immediately before transmission from the annulus into the inner ellipse.
For later calculations the frame is explicitly
\begin{equation}\label{eq:exactframe}
 \nn_\eps(\theta)
 =\frac{(\cos\theta,\sqrt{1+\eps}\sin\theta)}
        {\sqrt{1+\eps\sin^2\theta}},
 \qquad
 \ttt_\eps(\theta)
 =\frac{(-\sqrt{1+\eps}\sin\theta,\cos\theta)}
        {\sqrt{1+\eps\sin^2\theta}}.
\end{equation}
The coordinates are $(\theta,p)$, where
\begin{equation}\label{eq:incoming}
 V_{\rm ann}=q\bigl(-\sqrt{1-p^2}\,\nn_\eps+p\,\ttt_\eps\bigr),
 \qquad |p|<1.
\end{equation}
The antipodal symmetry $(R,V)\mapsto(-R,-V)$ identifies
$\theta$ with $\theta+\pi$ and leaves $p$ unchanged.  We henceforth use
this reduced section.  The axial orbits become fixed points
\begin{equation}\label{eq:XY}
 X=(0,0),\qquad Y=(\pi/2,0).
\end{equation}
In the unreduced section they are two-cycles.

The antipodal identification is used only to turn the two axial cycles
into fixed points and to remove the almost diametrical jump from the
angular coordinate. It does not identify the two different axes. A lift
of the long-axis state has $\theta=0$ or $\pi$, whereas a lift of the
short-axis state has $\theta=\pi/2$ or $3\pi/2$.

One return on the branch considered here consists of an inner entry
transmission, one chord of the inner ellipse, an inner exit transmission,
an annular flight, one outer reflection, and the return annular flight.
The final state is taken before the next inner entry transmission.

\begin{remark}[Section coordinates]
For $\eps>0$, eccentric angle and normalized tangential velocity are not
canonical area coordinates.  The invariant section area has the density
$|r_\eps'(\theta)|$ up to a constant.  The determinant-one calculation
below is made for the limiting circular map, where this density is
constant; we do not assume that the exact elliptic map preserves
$d\theta\dd p$.
\end{remark}

\section{The nearly circular limit map}\label{sec:limit}

Throughout this section $\alpha\in(0,1)$ and $\gamma>0$ are fixed, and
\begin{equation}\label{eq:singularscaling}
 \nu=1+\eps,\qquad q=\gamma\eps,\qquad \eps\downarrow0.
\end{equation}
The annular speed tends to zero, but the collision map is expressed in
normalized directions rather than in flight times.

\paragraph{Why a nontrivial limit is possible.}
At $\eps=0$ the inner transit is diametrical. A small positive $\eps$
changes the exit tangential velocity by an amount of order $\eps$.
The momentum used in the annulus is that velocity divided by
$q=\gamma\eps$, so the normalized change is of order one. Thus the
relevant limit is not the collision map of a circular table with a fixed
speed ratio: the geometric asymmetry and the annular speed vanish together.

\subsection{The inner transit}
After the entry refraction, the inner velocity is
\begin{equation}\label{eq:innerV}
 V=-\sqrt{1-q^2p^2}\,\nn_\eps(\theta)+qp\,\ttt_\eps(\theta).
\end{equation}
The second endpoint of the inner chord is given exactly by
\begin{equation}\label{eq:chordexact}
 r_*=r_\eps(\theta)+\tau V,\qquad
 \tau=-\frac{2r_\eps(\theta)^TD_{1+\eps}V}
                     {V^TD_{1+\eps}V}.
\end{equation}
Let $\theta_*$ be a local lift of its eccentric angle near
$\theta+\pi$.

\begin{lemma}[First-order inner scattering]\label{lem:innerscattering}
For fixed $\gamma$, as $\eps\to0$,
\begin{align}
 \theta_*&=\theta+\pi+
   \eps\bigl(\sin(2\theta)-2\gamma p\bigr)+O(\eps^2),
       \label{eq:thetastar}\\
 V\cdot\ttt_\eps(\theta_*)&=
   \eps\bigl(\gamma p-\sin(2\theta)\bigr)+O(\eps^2).
       \label{eq:vtstar}
\end{align}
The remainders are uniform together with derivatives of any fixed order
on compact subsets of the incoming coordinate domain.
\end{lemma}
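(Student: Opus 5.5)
The plan is to treat every quantity in \eqref{eq:chordexact} as a smooth function of $(\theta,p,\eps)$ with $q=\gamma\eps$ substituted, so that $\eps\mapsto(\theta_*,V\cdot\ttt_\eps(\theta_*))$ is smooth up to $\eps=0$. The expansions are then first-order Taylor expansions in $\eps$. The remainder is $\eps^2\int_0^1(1-s)\partial_\eps^2(\cdot)\,ds$, which is uniform together with all $(\theta,p)$-derivatives on compact sets. The only thing to check for smoothness is that the denominator $V^TD_{1+\eps}V$ stays near $1$ and that $\theta_*$ is well defined near $\theta+\pi$. Both follow from the $\eps=0$ picture: $V=-\nn_0(\theta)$, $\tau=2a$, $r_*=-r_0(\theta)$, and $\theta_*=\theta+\pi$. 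In that picture the chord is a diameter, hit transversally, so the implicit function theorem applies.

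\textbf{Step 1 (the endpoint).} I expand $V=-\nn_\eps(\theta)+\gamma\eps p\,\ttt_0(\theta)+O(\eps^2)$ and $\tau=2a+\eps\tau_1+O(\eps^2)$, then compute $r_*=r_\eps(\theta)+\tau V$ to first order. Two first-order sources appear: the anisotropy from $D_{1+\eps}$ and $1/\sqrt{1+\eps}$, and the tangential drift $\tau q p\,\ttt_0(\theta)=2a\gamma\eps p\,\ttt_0(\theta)$. To read off the eccentric angle I write $\theta_*=\theta+\pi+\eps s+O(\eps^2)$. I then use $r_\eps(\theta+\pi+\eps s)=-r_\eps(\theta)-\eps s\,a\,\ttt_0(\theta)+O(\eps^2)$, since $r_0'(\theta+\pi)=-a\ttt_0(\theta)$, and project the difference onto $\ttt_0(\theta)$. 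The drift contributes $s\ni-2\gamma p$, as stated. The anisotropic part should contribute $\sin 2\theta$: it comes from the first-order deviation of the chord direction $-\nn_\eps$ from $-r_\eps/|r_\eps|$, and from the first-order change of the ellipse. I will do this projection explicitly using \eqref{eq:exactframe}. Equivalently, I can use the conditions $\cos\theta_*=r_*^{(1)}/a$ and $\sin\theta_*=\sqrt{1+\eps}\,r_*^{(2)}/a$ and linearize.

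\textbf{Step 2 (exit tangential velocity).} Using $\ttt_0'=-\nn_0$, I get
\[
\ttt_\eps(\theta_*)=\ttt_\eps(\theta+\pi)+\eps s\,\nn_0(\theta)+O(\eps^2)=-\ttt_\eps(\theta)+\eps s\,\nn_0(\theta)+O(\eps^2).
\]
Dotting with $V$ gives three contributions. The term $-\nn_\eps(\theta)\cdot(-\ttt_\eps(\theta))$ vanishes exactly by orthogonality of the frame. The term $-\nn_0\cdot\eps s\,\nn_0$ equals $-\eps s$. The drift term gives $\gamma\eps p\,\ttt_0\cdot(-\ttt_0)=-\gamma\eps p$. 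The total is $-\eps(\sin2\theta-2\gamma p)-\gamma\eps p=\eps(\gamma p-\sin 2\theta)$, which is \eqref{eq:vtstar}. So \eqref{eq:vtstar} is a direct consequence of \eqref{eq:thetastar}.

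\textbf{Main obstacle.} I expect the main difficulty to be the anisotropic coefficient $\sin 2\theta$ in Step 1. There, the $\eps$-dependence of the normal \eqref{eq:exactframe}, of the parametrization \eqref{eq:ellipseparam}, of the quadratic form in $\tau$, and of the eccentric-angle extraction must all be collected consistently. I would first check this term on the invariant cases $\theta=0,\pi/2$, where it must vanish by symmetry. I would then do the general computation by writing $r_*+r_\eps(\theta)$ in the basis $(\nn_0,\ttt_0)$ and keeping only the tangential component.
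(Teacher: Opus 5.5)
Your framework is the paper's own. You expand to first order in $\eps$ in the frame $(\er,\et)=(\nn_0(\theta),\ttt_0(\theta))$, compare the chord endpoint with $r_\eps(\theta+\pi+\eps\zeta)$ through its tangential component, and then dot $V$ with the rotated tangent. Your smoothness and remainder argument is fine. Your Step 2 is correct and slightly cleaner than the paper's: using $\ttt_\eps(\theta+\pi)=-\ttt_\eps(\theta)$ and the exact orthogonality $\nn_\eps\cdot\ttt_\eps=0$, you avoid tracking the first-order tilt of $\ttt_\eps(\theta_*)$ that the paper carries along.

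The gap is in Step 1. The anisotropic coefficient is announced (``should contribute $\sin 2\theta$'') but never computed, and the symmetry check at $\theta=0,\pi/2$ cannot determine its size. Since you deduce \eqref{eq:vtstar} from \eqref{eq:thetastar}, this coefficient is the whole content of the lemma. As written, only the drift term $-2\gamma p$ is established.

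Your own projection closes the gap quickly. By the exact symmetry $r_\eps(\theta+\pi)=-r_\eps(\theta)$, the endpoint condition is $2r_\eps(\theta)+\tau V=-\eps\zeta a\,\ttt_0(\theta)+O(\eps^2)$, with $\zeta$ your $s$. Because $V\cdot\ttt_0(\theta)=O(\eps)$, only $\tau=2a+O(\eps)$ is needed; $\tau_1$ and the radial component, which the paper computes in full, play no role. From \eqref{eq:ellipseparam} and \eqref{eq:exactframe},
\[
 r_\eps(\theta)\cdot\ttt_0(\theta)=-\tfrac{a\eps}{4}\sin(2\theta)+O(\eps^2),
 \qquad
 V\cdot\ttt_0(\theta)=\eps\bigl(\gamma p-\tfrac14\sin(2\theta)\bigr)+O(\eps^2).
\]
Hence $-a\eps\zeta=-a\eps\sin(2\theta)+2a\gamma\eps p$, that is, $\zeta=\sin(2\theta)-2\gamma p$.

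The anisotropic term splits into two equal halves. One comes from $r_\eps(\theta)$ not lying on the ray $\er$ (eccentric versus polar angle). The other comes from the tilt of $-\nn_\eps$ away from $-\er$. Together they are exactly the deviation of $-\nn_\eps$ from $-r_\eps/|r_\eps|$ that you identified. The parametrization near $\theta+\pi$ contributes nothing beyond $-\eps\zeta a\,\ttt_0(\theta)$.
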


\begin{proof}
Write $c=\cos\theta$, $s=\sin\theta$,
$\er=(c,s)$, and $\et=(-s,c)$.  Direct normalization of
$D_{1+\eps}r_\eps$ gives
\[
 \nn_\eps=\er+\frac\eps2 sc\,\et+O(\eps^2),
 \qquad
 \ttt_\eps=\et-\frac\eps2 sc\,\er+O(\eps^2).
\]
For reference, the two scalar products in the chord formula are
\[
 r_\eps^TD_{1+\eps}V
       =-a(1+\eps s^2/2)+O(\eps^2),
 \qquad
 V^TD_{1+\eps}V=1+\eps s^2+O(\eps^2).
\]
Consequently
\begin{align*}
 a^{-1}r_\eps&=(1-\eps s^2/2)\er-\eps sc\,\et/2+O(\eps^2),\\
 V&=-\er+\eps(\gamma p-sc/2)\et+O(\eps^2),\\
 \tau&=2a(1-\eps s^2/2)+O(\eps^2).
\end{align*}
Substituting into \eqref{eq:chordexact},
\[
 a^{-1}r_*=
 -(1-\eps s^2/2)\er+
 \eps(2\gamma p-3sc/2)\et+O(\eps^2).
\]
On the other hand,
\[
 a^{-1}r_\eps(\theta+\pi+\eps\zeta)
 =-(1-\eps s^2/2)\er+
        \eps(sc/2-\zeta)\et+O(\eps^2).
\]
Comparison of the tangential coefficients yields
$\zeta=2sc-2\gamma p$, which is \eqref{eq:thetastar}.
Moreover,
\[
 \ttt_\eps(\theta_*)=
 -\et+\eps(\zeta+sc/2)\er+O(\eps^2).
\]
Taking its scalar product with $V$ gives
$-\eps(\zeta+\gamma p)+O(\eps^2)$, proving
\eqref{eq:vtstar}.  All expressions used in these expansions are smooth
in $(\eps,\theta,p)$ near $\eps=0$ on the indicated compacts, with
nonvanishing denominators.  Taylor's formula gives the stated derivative
control.
\end{proof}

Tangential velocity is conserved in the exit refraction.  Its normalized
annular value is therefore
\begin{equation}\label{eq:wexact}
 w_\eps(\theta,p)=\frac{V\cdot\ttt_\eps(\theta_*)}{\gamma\eps}
   =p-\gamma^{-1}\sin(2\theta)+O(\eps).
\end{equation}
The outgoing annular direction is
\[
 \sqrt{1-w_\eps^2}\,\nn_\eps(\theta_*)
       +w_\eps\,\ttt_\eps(\theta_*),
\]
provided $|w_\eps|<1$.

\subsection{The annular excursion}
For concentric circles, let the initial eccentric (now also polar)
angle be $\theta$, and let $w$ be the outgoing tangential fraction at the
inner circle. Normalize the speed on this excursion to one. The conserved
angular momentum is $aw$. The velocity makes an angle $\arcsin w$ with
the outward radius initially, and an angle $\arcsin(\alpha w)$ with the
outward radius when it reaches the outer circle. Since the velocity
direction is constant during the straight flight, the change in polar
angle is their difference.

At the outer reflection the radial velocity reverses and angular momentum
is unchanged. The return leg contributes the same angular increment.
At the inner return, the tangential fraction is again $w$. Consequently
\begin{equation}\label{eq:annulusmap}
 (\theta,w)\longmapsto(\theta+g_\alpha(w),w),
 \qquad
 g_\alpha(w)=2\bigl(\arcsin w-\arcsin(\alpha w)\bigr).
\end{equation}
The angles are signed; the same formula applies when $w<0$.

Combining \eqref{eq:wexact} and \eqref{eq:annulusmap}, and removing the
antipodal increment $\pi$, gives the limiting map
\begin{equation}\label{eq:limitmap}
 \boxed{
 \begin{aligned}
 p_+&=p-\gamma^{-1}\sin(2\theta),\\
 \theta_+&=\theta+g_\alpha(p_+)\pmod\pi.
 \end{aligned}}
\end{equation}
It is considered only on
\begin{equation}\label{eq:regulardomain}
 \mathcal D_{\alpha,\gamma}
 =\{(\theta,p): |p|<1,\ |p-\gamma^{-1}\sin(2\theta)|<1\}.
\end{equation}
No definition is imposed here beyond the transmission threshold.

The domain $\mathcal D_{\alpha,\gamma}$ is a branch domain, not an
assertion that every initial state stays on this branch forever. Only the
orbit constructed below, and sufficiently small neighbourhoods of the
pieces used in its continuation, will be required to remain there.
The map is a local diffeomorphism: the drift can be inverted first, and
then the kick.  In these limiting coordinates it has determinant one.

\begin{proposition}[Differentiable convergence of the exact map]\label{prop:C1}
Let $F_\eps$ denote the exact reduced elliptic return map on the stated
itinerary, and let $F_0$ be \eqref{eq:limitmap}.  On every compact subset
of \eqref{eq:regulardomain}, $F_\eps$ is well defined for all sufficiently
small positive $\eps$ and
\begin{equation}\label{eq:mapconvergence}
 F_\eps=F_0+O_{C^r}(\eps)
\end{equation}
for every fixed finite $r$, with local lifts for the angular coordinate.
In particular the convergence is $C^1$.
\end{proposition}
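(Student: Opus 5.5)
We will write $F_\eps$ as a composition of explicit smooth pieces and check that each piece depends smoothly on $\eps$ near $\eps=0$ after the singular rescaling. The only place where division by $\eps$ occurs is the normalization of the exit tangential velocity by $q=\gamma\eps$, and Lemma~\ref{lem:innerscattering} already handles it.

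\textbf{Inner transit.} Lemma~\ref{lem:innerscattering} and \eqref{eq:wexact} show that $(\theta,p)\mapsto(\theta_*,w_\eps)$ equals $(\theta+\pi,\,p-\gamma^{-1}\sin 2\theta)+O_{C^r}(\eps)$ on compacts. To justify $C^r$ control of $w_\eps$ itself, we observe that $V\cdot\ttt_\eps(\theta_*)$ is a smooth function of $(\eps,\theta,p)$ and vanishes identically at $\eps=0$. Indeed, at $\eps=0$ with $q=0$ the chord is diametrical and $V=-e_r$ is orthogonal to the tangent at the antipode. By Hadamard's lemma, $V\cdot\ttt_\eps(\theta_*)=\eps\,G(\eps,\theta,p)$ with $G$ smooth. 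Hence $w_\eps=G/\gamma$ is smooth up to $\eps=0$, with $G(0,\cdot)/\gamma=p-\gamma^{-1}\sin2\theta$.

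Well-definedness needs three things. The chord denominator $V^TD_{1+\eps}V$ must be nonzero, which is clear. Entry transmission must be allowed, and it is: $1-q^2p^2>0$ because $q\to0$. Exit transmission requires $|w_\eps|<1$. On a compact $K\subset\mathcal D_{\alpha,\gamma}$ the limit satisfies $|p-\gamma^{-1}\sin2\theta|\le1-c_K$, so uniform convergence gives $|w_\eps|<1$ for small $\eps$.

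\textbf{Annular excursion for the ellipses.} Starting at $r_\eps(\theta_*)$ with unit direction $\sqrt{1-w^2}\,\nn_\eps+w\,\ttt_\eps$, we fly to the outer ellipse $\{x^2+(1+\eps)y^2=1\}$, reflect there, fly back to the inner ellipse, and record the new eccentric angle and the tangential fraction of the incoming (normalized) velocity. Each step solves a quadratic for a flight time, takes a root, and normalizes a normal vector. Every ingredient is smooth in $(\eps,\theta_*,w)$ provided the roots stay simple, that is, provided the trajectory hits each ellipse transversally. For $|w|\le1-c$ this holds at $\eps=0$: the hit angles are $\arcsin w$ and $\arcsin(\alpha w)$, both bounded away from $\pi/2$. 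It persists for small $\eps$ by the implicit function theorem, because the boundaries are transversal, smoothly varying curves. The annular flight must also miss the inner ellipse before reaching the outer one; this is automatic, since the flight starts outward and $\{\Psi_\nu\le a^2\}$ is convex. At $\eps=0$ the composition is the circular map \eqref{eq:annulusmap}. Hence the annular map is $(\theta_*,w)\mapsto(\theta_*+g_\alpha(w),w)+O_{C^r}(\eps)$. The speed $q$ drops out because only directions enter.

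\textbf{Conclusion and main obstacle.} Composing the two smooth families and subtracting the antipodal $\pi$ in a local lift gives a family $F_\eps$ smooth in $\eps\in[0,\eps_1)$ with $F_\eps|_{\eps=0}=F_0$. Taylor expansion in $\eps$, with uniform bounds on compacts, then gives \eqref{eq:mapconvergence}. The main obstacle is the first step: extracting the exact factor $\eps$ from $V\cdot\ttt_\eps(\theta_*)$ uniformly with all derivatives. This requires $\theta_*$ to be a smooth function of $(\eps,\theta,p)$. We obtain that by solving $r_*=r_\eps(\theta_*)$ through the implicit function theorem near $\eps=0$, where $\partial_{\theta_*}r_\eps\neq0$. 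Once $\theta_*$ is smooth, Hadamard's lemma finishes the step.
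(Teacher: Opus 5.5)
Your proposal is correct and follows essentially the same route as the paper. Your Hadamard-lemma step is exactly the paper's formula $f/\eps=\int_0^1\partial_\eps f(t\eps,\theta,p)\dd t$, which makes $V\cdot\ttt_\eps(\theta_*)/(\gamma\eps)$ smooth through $\eps=0$, and both arguments treat the annular excursion through simple, transverse roots of the quadratic intersection equations at $\eps=0$. One cosmetic point: $r_*=r_\eps(\theta_*)$ is an overdetermined vector equation in one unknown, so it is cleaner to take $\theta_*$ directly as the angle of the unit vector $(x_*/a,\sqrt{1+\eps}\,y_*/a)$, or to apply the implicit function theorem to a single scalar component.
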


\begin{proof}
The inner chord and its endpoint are smooth by \eqref{eq:chordexact}.
The numerator in \eqref{eq:wexact} vanishes identically at $\eps=0$
for every $(\theta,p)$.  If it is denoted by $f(\eps,\theta,p)$, then
\[
 \frac{f(\eps,\theta,p)}\eps
 =\int_0^1\partial_\eps f(t\eps,\theta,p)\dd t.
\]
Thus the apparent singularity in \eqref{eq:wexact} is removable,
including derivatives in the section variables.  This argument fixes
$\gamma$; it gives no uniform estimate as $\gamma\to0$.
The quotient is therefore the restriction of a smooth function of
$(\eps,\theta,p)$ through $\eps=0$, not merely a bounded quotient.
This distinction is essential for the later use of transverse persistence.

On a compact subset of \eqref{eq:regulardomain}, the square root of
$1-w_\eps^2$ is uniformly regular.  In the annulus use the direction
$V/q$, not the small speed $q$, to compute the intersections.  A line
$R+tU$ meets a level ellipse $\Psinu=c$ by solving
\[
 (U^T\Dnu U)t^2+2(R^T\Dnu U)t+\Psinu(R)-c=0.
\]
At $\eps=0$ the selected intersections with the outer circle and the
return inner circle are transverse.  Their positive roots, the specular
reflection, and the final section coordinates therefore depend smoothly
on $(\eps,\theta,p)$ near the compact set.  The root corresponding to
first inward return remains the selected root by continuity. More
explicitly, at $\eps=0$ the outer circle is reached with strictly positive
radial velocity and the inner circle is reached with strictly negative
radial velocity. Both signs have uniform margins on the compact set. The
chosen roots are simple and stay separated from the other roots, so no
extra collision or grazing event can enter the prescribed itinerary.  Composing
these smooth maps proves \eqref{eq:mapconvergence}.
\end{proof}

\section{An explicit heteroclinic of the limit map}\label{sec:contraction}

From now on
\begin{equation}\label{eq:alphagamma}
 \alpha=\frac12,\qquad \gamma=\frac1{16},
 \qquad g=g_{1/2}.
\end{equation}
Since $g'(0)=1$, the derivative matrices of $F_0$ at the axial fixed
points are
\begin{equation}\label{eq:matrices}
 DF_0(X)=\begin{pmatrix}-31&1\\-32&1\end{pmatrix},
 \qquad
 DF_0(Y)=\begin{pmatrix}33&1\\32&1\end{pmatrix}.
\end{equation}
Their traces are $-30$ and $34$, respectively, and their determinants are
one.  Both points are hyperbolic saddles.

The choices $\alpha=1/2$ and $\gamma=1/16$ have separate purposes.
The annular drift can then span any angle in $(-2\pi/3,2\pi/3)$, so a
transition of size $\pi/2$ leaves room for small corrections. The kick
strength $\gamma^{-1}=16$ makes those corrections contractive. The proof
will use both margins explicitly; the numerical values are convenient
choices rather than fitted parameters.

\subsection{Inverting the drift}
For $0<\alpha<1$, the map $g_\alpha$ is strictly increasing.  It maps
$(-1,1)$ onto the interval $(-2\arccos\alpha,2\arccos\alpha)$, and its inverse is
\begin{equation}\label{eq:inversedrift}
 P_\alpha(d)=\frac{\sin(d/2)}{
       \sqrt{1+\alpha^2-2\alpha\cos(d/2)}}.
\end{equation}
To check the inverse without squaring away its sign, put $\beta=d/2$
and $L=(1+\alpha^2-2\alpha\cos\beta)^{1/2}$. Since
$|\beta|<\arccos\alpha$, we have $\cos\beta>\alpha$ and
\[
 p=\frac{\sin\beta}{L},\quad
 \sqrt{1-p^2}=\frac{\cos\beta-\alpha}{L},\quad
 \sqrt{1-\alpha^2p^2}=\frac{1-\alpha\cos\beta}{L}.
\]
The sine and cosine of $\arcsin p-\arcsin(\alpha p)$ are therefore
$\sin\beta$ and $\cos\beta$, with the prescribed angle branch. This
proves \eqref{eq:inversedrift}. Moreover,
\[
 g_\alpha'(p)=2\left(\frac1{\sqrt{1-p^2}}
       -\frac\alpha{\sqrt{1-\alpha^2p^2}}\right)
       \ge2(1-\alpha),
\]
where the lower bound follows by first replacing
$(1-p^2)^{-1/2}$ by the smaller quantity
$(1-\alpha^2p^2)^{-1/2}$. Thus
\begin{equation}\label{eq:Pbound}
 |P_\alpha(d)|<1,
 \qquad 0<P_\alpha'(d)\le\frac1{2(1-\alpha)}.
\end{equation}
Write $P=P_{1/2}$.  Its domain is $(-2\pi/3,2\pi/3)$ and $|P'|\le1$.

For a lifted orbit whose increments lie in this interval, the map
\eqref{eq:limitmap} is equivalent to
\begin{equation}\label{eq:recurrence}
 \sin(2\theta_n)=\frac1{16}
 \bigl[P(\theta_n-\theta_{n-1})
             -P(\theta_{n+1}-\theta_n)\bigr],
 \qquad n\in\Z,
\end{equation}
with
\begin{equation}\label{eq:momentfromangle}
 p_n=P(\theta_n-\theta_{n-1}).
\end{equation}

\subsection{A contraction on bi-infinite sequences}
Let
\begin{equation}\label{eq:stepsequence}
 s_n=\begin{cases}0,&n\le0,\\\pi/2,&n\ge1,\end{cases}
 \qquad \sigma_n=\cos(2s_n)\in\{1,-1\}.
\end{equation}
The reference sequence $s$ records the intended itinerary, not a solution:
it stays exactly on the long-axis symbol for $n\le0$ and then makes a
single jump of size $\pi/2$ to the short-axis symbol for $n\ge1$.  Thus the
large angular transition is already present in $s$; the corrections $u_n$
only adjust this ideal itinerary so that the exact second-order recurrence
is satisfied.  The Banach space below contains genuine bi-infinite
sequences; no finite truncation is part of the existence argument.

We seek $\theta_n=s_n+u_n$ with
\[
 u\in\cB:=\{u\in\ell^\infty(\Z):\|u\|_\infty\le1/8\}.
\]
For every $u\in\cB$,
\begin{equation}\label{eq:incrementmargin}
 |\theta_{n+1}-\theta_n|
 \le d_*:=\frac\pi2+\frac14<\frac{2\pi}{3}.
\end{equation}
All terms in \eqref{eq:recurrence} are therefore defined, with a uniform
margin from the endpoints of the inverse-drift domain.

Define $\mathcal T:\cB\to\ell^\infty(\Z)$ by
\begin{equation}\label{eq:contractionoperator}
 (\mathcal T u)_n=
 \frac{\sigma_n}{2}\arcsin\left(
 \frac{P(\theta_n-\theta_{n-1})
              -P(\theta_{n+1}-\theta_n)}{16}\right).
\end{equation}
The relation $u=\mathcal T u$ is equivalent to
\eqref{eq:recurrence} within $\cB$, because $2|u_n|\le1/4<\pi/2$.

\begin{lemma}[Uniform contraction]\label{lem:contraction}
The map $\mathcal T$ takes $\cB$ strictly into itself and
\begin{equation}\label{eq:kappa}
 \|\mathcal T u-\mathcal T v\|_\infty
 \le\kappa\|u-v\|_\infty,
 \qquad \kappa=\frac1{\sqrt{63}}<1.
\end{equation}
Consequently it has a unique fixed point in $\cB$.
The uniqueness is within this ball and for this fixed placement of the
transition; it is not a claim that the full collision map has only one
heteroclinic orbit.
\end{lemma}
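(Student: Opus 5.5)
The plan is to verify the two claims of Lemma~\ref{lem:contraction} directly from the bounds \eqref{eq:Pbound} on the inverse drift and the elementary properties of $\arcsin$. Then I would apply the Banach fixed point theorem on the closed ball $\cB$, which is complete as a closed subset of $\ell^\infty(\Z)$. Throughout, fix $u\in\cB$ and write $\theta_n=s_n+u_n$. By \eqref{eq:incrementmargin}, every increment $\theta_{n+1}-\theta_n$ lies in the compact interval $[-d_*,d_*]\subset(-2\pi/3,2\pi/3)$. Hence $P$ is defined at all arguments appearing in \eqref{eq:contractionoperator}, with $|P|<1$ and $0<P'\le1$ there, since $\alpha=1/2$ gives $1/(2(1-\alpha))=1$.

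\emph{Self-map.} Set
\[
z_n(u)=\frac{P(\theta_n-\theta_{n-1})-P(\theta_{n+1}-\theta_n)}{16}.
\]
Since each value of $P$ has modulus strictly less than $1$, we get $|z_n(u)|<2/16=1/8$ for every $n$. The argument of $\arcsin$ therefore stays in $[-1/8,1/8]$, uniformly in $n$ and $u$. By monotonicity and oddness of $\arcsin$,
\[
|(\mathcal Tu)_n|\le\tfrac12\arcsin(1/8).
\]
I would then use a convexity bound such as $\arcsin x\le x/\sqrt{1-x^2}$ for $0\le x<1$. This gives $\arcsin(1/8)\le 1/\sqrt{63}<0.13$, so $\|\mathcal Tu\|_\infty\le 0.065<1/8$. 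Hence $\mathcal T$ maps $\cB$ strictly inside itself, with a uniform margin.

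\emph{Lipschitz estimate.} Take $u,v\in\cB$ and compare term by term.

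First, on $[-1/8,1/8]$ we have $\arcsin'(x)=(1-x^2)^{-1/2}\le 8/\sqrt{63}$. Since both $z_n(u)$ and $z_n(v)$ lie in this interval, the mean value theorem gives
\[
|(\mathcal Tu)_n-(\mathcal Tv)_n|\le\tfrac12\cdot\tfrac{8}{\sqrt{63}}\,|z_n(u)-z_n(v)|.
\]

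Second, the increments of $u$ and of $v$ both lie in the convex interval $[-d_*,d_*]$, on which $|P'|\le1$. The difference of corresponding increments is $(u_n-u_{n-1})-(v_n-v_{n-1})$, of modulus at most $2\|u-v\|_\infty$, because $s$ cancels. Each of the two $P$-terms therefore changes by at most $2\|u-v\|_\infty$, and
\[
|z_n(u)-z_n(v)|\le\frac{4\|u-v\|_\infty}{16}=\frac{\|u-v\|_\infty}{4}.
\]

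Combining the two bounds gives
\[
|(\mathcal Tu)_n-(\mathcal Tv)_n|\le\frac{4}{\sqrt{63}}\cdot\frac14\|u-v\|_\infty=\frac{1}{\sqrt{63}}\|u-v\|_\infty.
\]
Taking the supremum over $n$ yields \eqref{eq:kappa}. Existence and uniqueness of the fixed point in $\cB$ then follow from the Banach fixed point theorem.

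I do not expect a genuine obstacle here. The only points needing care are the following:
\begin{itemize}
\item Both the $\arcsin$ derivative bound and the $P'$ bound are applied on intervals that contain the arguments for both $u$ and $v$. This is guaranteed by the uniform bounds $|z_n|<1/8$ and \eqref{eq:incrementmargin}.
\item The constant bookkeeping must be right: the factor $\tfrac12$, the factor $2$ from differences of increments, the two $P$-terms, and the division by $16$ must combine exactly to $1/\sqrt{63}$.
\end{itemize}
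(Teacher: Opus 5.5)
Your proposal is correct and follows essentially the same route as the paper's proof: the bound $|z_n|\le 1/8$ from $|P|<1$, the Lipschitz constant $4$ for the difference of the two $P$-terms (via $|P'|\le 1$ on $[-d_*,d_*]$), the factor $8/\sqrt{63}$ for the arcsine derivative, and the contraction principle on the closed ball. The only difference is that you justify $\tfrac12\arcsin(1/8)<1/8$ explicitly via $\arcsin x\le x/\sqrt{1-x^2}$, which the paper simply asserts.
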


\begin{proof}
By \eqref{eq:Pbound}, the argument of the arcsine has absolute value at
most $1/8$.  Hence
\[
 \|\mathcal T u\|_\infty\le\tfrac12\arcsin(1/8)<1/8.
\]
The difference of the two $P$ terms in \eqref{eq:contractionoperator}
is Lipschitz in $u$ with constant at most $4$.  On $[-1/8,1/8]$ the
arcsine derivative is bounded by $8/\sqrt{63}$.  Thus the total constant
is
\[
 \frac12\,\frac8{\sqrt{63}}\,\frac1{16}\,4
       =\frac1{\sqrt{63}}.
\]
The contraction principle on the complete closed ball $\cB$ concludes
the proof.
\end{proof}

\subsection{Exponential tails and a uniform transmission margin}
Let $u$ be the fixed point, and let $u^{(m)}=\mathcal T^m(0)$.
The first iterate is supported on $\{0,1\}$, and the operator is
nearest-neighbour in the sequence index.  Inductively,
\begin{equation}\label{eq:finitesupport}
 \supp u^{(m)}\subseteq\{1-m,\ldots,m\},\qquad m\ge1.
\end{equation}
At the same time,
\begin{equation}\label{eq:picarderror}
 \|u-u^{(m)}\|_\infty\le\kappa^m\|u\|_\infty.
\end{equation}
For an index at distance $d\ge1$ from $\{0,1\}$, choosing $m=d$
in \eqref{eq:finitesupport}--\eqref{eq:picarderror} gives
$|u_n|\le\|u\|_\infty\kappa^d$.  Formula
\eqref{eq:momentfromangle}, together with $P(0)=0$ and $|P'|\le1$,
gives the same type of decay for $p_n$ away from the central jump.
Therefore
\begin{equation}\label{eq:maptails}
 (\theta_n,p_n)\longrightarrow X\quad(n\to-\infty),
 \qquad
 (\theta_n,p_n)\longrightarrow Y\quad(n\to+\infty),
\end{equation}
exponentially in the iteration index.

Furthermore, \eqref{eq:incrementmargin} implies the exact bound
\begin{equation}\label{eq:rhomargin}
 |p_n|\le\rho:=P(d_*)<1\qquad(n\in\Z).
\end{equation}
The normalized outgoing momentum at the $n$th inner transit is
$p_{n+1}$, so the same bound holds for it.  Thus the closure of the orbit,
including $X$ and $Y$, is compactly contained in the regular transmission
branch.  The decimal value $\rho\simeq0.98984214$ is only an illustration;
the strict inequality in \eqref{eq:rhomargin} follows from
$d_*<2\pi/3$.

\subsection{Why the same contraction gives transversality}
The existence of a connecting sequence is not yet its robustness. To
obtain persistence we must rule out a common tangent to the stable and
unstable curves. The useful observation is that such a tangent would
produce a bounded solution of the linearized recurrence on all of $\Z$.
We first exclude that possibility directly.

\begin{lemma}[No nonzero bounded variational sequence]\label{lem:nobounded}
Along the orbit constructed above, the only bounded solution
$(v_n,\pi_n)_{n\in\Z}$ of the linearized map is the zero solution.
\end{lemma}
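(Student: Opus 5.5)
The plan is to reduce the linearized map to the linearization of the second-order recurrence \eqref{eq:recurrence}. Then we show that the resulting linear operator on $\ell^\infty(\Z)$ is exactly the derivative of $u\mapsto u-\mathcal T u$. The latter is injective because $D\mathcal T$ has norm at most $\kappa<1$.

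First, linearize $F_0$ along the orbit. Write a variational sequence as $(v_n,\pi_n)$, with $v_n$ tangent to the angle and $\pi_n$ tangent to the momentum. Differentiating \eqref{eq:limitmap} gives
\[
\pi_{n+1}=\pi_n-\tfrac{2}{\gamma}\cos(2\theta_n)v_n,
\qquad
v_{n+1}=v_n+g'(p_{n+1})\pi_{n+1}.
\]
Since $g'\ge 2(1-\alpha)=1>0$ and $P'=1/g'$, the second relation can be solved as $\pi_{n}=P'(\theta_n-\theta_{n-1})(v_n-v_{n-1})$. This is the derivative of \eqref{eq:momentfromangle}. Moreover, $0<P'\le1$ on the increments, by \eqref{eq:incrementmargin} and \eqref{eq:Pbound}. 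Hence bounded $(v,\pi)$ gives bounded $v$, and $v=0$ forces $\pi=0$. Substituting into the first relation shows that $v$ solves the linearization of \eqref{eq:recurrence}:
\[
2\cos(2\theta_n)v_n=\tfrac1{16}\bigl[P'(\theta_n-\theta_{n-1})(v_n-v_{n-1})-P'(\theta_{n+1}-\theta_n)(v_{n+1}-v_n)\bigr].
\]

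Next, identify this with the fixed-point problem. Differentiate $\mathcal T$ at the fixed point $u$ in the direction $v\in\ell^\infty$. We use $\sin(2\theta_n)=\sigma_n\sin(2u_n)$, since $s_n\in\{0,\pi/2\}$, and $\cos(2\theta_n)=\sigma_n\cos(2u_n)$. The equation $v=D\mathcal T(u)v$ reads
\[
v_n=\frac{\sigma_n}{2}\,\frac{1}{\sqrt{1-\xi_n^2}}\,\frac{1}{16}\bigl[\cdots\bigr],
\qquad \xi_n=\sin(2u_n).
\]
Here $\sqrt{1-\xi_n^2}=\cos(2u_n)$, because $2|u_n|\le1/4$. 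So this equation is exactly the linear recurrence above after multiplying by $2\sigma_n\cos(2u_n)$, which is nonzero. Thus a bounded variational solution is a fixed point $v\in\ell^\infty$ of the bounded linear operator $D\mathcal T(u)$.

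Finally, bound the operator. The same estimates as in Lemma~\ref{lem:contraction} give $\|D\mathcal T(u)\|_{\ell^\infty\to\ell^\infty}\le\kappa$: the $P'$ factors are at most $1$, so the bracket is at most $4\|v\|_\infty$; the arcsine factor is at most $8/\sqrt{63}$, since $|\xi_n|\le1/8$. Alternatively, one can argue directly from the Lipschitz bound \eqref{eq:kappa}, since $u$ is interior to $\cB$ and the derivative inherits the Lipschitz constant. In either case $\|v\|_\infty\le\kappa\|v\|_\infty$, so $v=0$ and hence $\pi=0$.

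The main point requiring care is the first step: the variational map must be handled in the lifted coordinates, with the mod $\pi$ reduction irrelevant at the linear level. We must also check that the substitution $\pi_n\leftrightarrow v_n-v_{n-1}$ is a bijection between bounded sequences; this uses the uniform positive lower bound on $P'$, which holds because the increments stay in the compact interval $[-d_*,d_*]$. The rest is the contraction estimate already proved.
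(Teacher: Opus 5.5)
Your proof is correct and follows the paper's: the paper derives the same linear recurrence from the inverse-drift relation and bounds it directly using $|\cos(2\theta_n)|=\cos(2u_n)\ge\sqrt{63}/8$ and $|P'|\le1$. This is your estimate $\|D\mathcal T(u)\|\le\kappa=1/\sqrt{63}$ without the operator packaging. Two minor points: the arcsine argument at the fixed point is $\sigma_n\sin(2u_n)$ rather than $\sin(2u_n)$, which is harmless since only its square enters, and the positive lower bound on $P'$ is not needed, because $v=0$ already forces $\pi=0$.
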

\begin{proof}
Differentiate the inverse-drift relation:
\begin{equation}\label{eq:variationalmoment}
 \pi_n=P'(\theta_n-\theta_{n-1})(v_n-v_{n-1}).
\end{equation}
The differentiated angular recurrence is
\begin{equation}\label{eq:linearrecurrence}
\begin{split}
 2\cos(2\theta_n)v_n=\frac1{16}\bigl[&
 P'(\theta_n-\theta_{n-1})(v_n-v_{n-1})\\
 &-P'(\theta_{n+1}-\theta_n)(v_{n+1}-v_n)\bigr].
\end{split}
\end{equation}
At the fixed point, the arcsine argument in
\eqref{eq:contractionoperator} has modulus at most $1/8$. Hence
\[
 |\cos(2\theta_n)|=\cos(2u_n)\ge\frac{\sqrt{63}}8.
\]
Using $|P'|\le1$ in \eqref{eq:linearrecurrence} gives, for each $n$,
\[
 \frac{\sqrt{63}}4|v_n|
       \le\frac4{16}\|v\|_\infty.
\]
Taking the supremum, which need not be attained, yields
$\|v\|_\infty\le\|v\|_\infty/\sqrt{63}$. Thus $v=0$.
Equation~\eqref{eq:variationalmoment} then implies $\pi=0$.
\end{proof}

\begin{proposition}[Transverse limit connection]\label{prop:limitconnection}
The orbit constructed above is a transverse heteroclinic connection from
$X$ to $Y$ for $F_0$.
\end{proposition}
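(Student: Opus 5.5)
We have to show three things: the orbit lies in $W^u(X)\cap W^s(Y)$ for $F_0$, these are genuine one-dimensional curves, and their tangent lines at a point of the orbit are distinct. The first is \eqref{eq:maptails}. The second holds because $X$ and $Y$ are hyperbolic saddles by \eqref{eq:matrices}: the stable manifold theorem for the smooth area-preserving local diffeomorphism $F_0$ gives smooth one-dimensional local manifolds $W^u_{\rm loc}(X)$ and $W^s_{\rm loc}(Y)$. By \eqref{eq:rhomargin}, the closure of the orbit stays in a compact part of $\mathcal D_{1/2,1/16}$, so $F_0$ is a local diffeomorphism along it. Transporting the local manifolds along the orbit by $F_0$ and $F_0^{-1}$ then yields immersed curves through every point $(\theta_n,p_n)$.

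For transversality, suppose the tangent lines of $W^u(X)$ and $W^s(Y)$ coincide at $(\theta_0,p_0)$, and let $\xi_0\neq0$ span that common line. Define $\xi_n=DF_0^n(\theta_0,p_0)\xi_0$ for all $n\in\Z$, which makes sense because $F_0$ is a local diffeomorphism along the orbit. Invariance of the manifolds gives $\xi_n\in T W^u(X)\cap TW^s(Y)$ at $(\theta_n,p_n)$ for every $n$. We then show $(\xi_n)$ is bounded in both directions, which contradicts Lemma~\ref{lem:nobounded}.

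The boundedness as $n\to+\infty$ is where the work lies. For large $n$, the point $(\theta_n,p_n)$ lies in $W^s_{\rm loc}(Y)$, and $\xi_n$ is tangent to it. Since $W^s_{\rm loc}(Y)$ is $C^1$ and tangent at $Y$ to the contracting eigenline of $DF_0(Y)$, continuity of tangent spaces gives the following: for $n\ge N$, $\xi_n$ lies in a thin cone around that eigenline. We pick an adapted norm in which $DF_0(Y)$ contracts this cone by the factor $\lambda_s<1$. The orbit converges exponentially to $Y$, so $\|DF_0(\theta_n,p_n)-DF_0(Y)\|\to0$. Hence, for $N$ large, $|\xi_{n+1}|\le\frac{1+\lambda_s}{2}|\xi_n|$ for all $n\ge N$. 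The backward direction is symmetric: $\xi_n$ is tangent to $W^u_{\rm loc}(X)$ near $X$, and $DF_0^{-1}$ contracts along the unstable eigendirection of $DF_0(X)$. Because the stable manifold theorem guarantees only continuity of the tangent bundle, the cone argument is needed, not merely the eigenvalues.

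With $\xi_n$ bounded (in fact decaying) on all of $\Z$, write $\xi_n=(v_n,\pi_n)$. Differentiating \eqref{eq:limitmap} along the orbit, using the lift in which the increments satisfy \eqref{eq:incrementmargin}, shows that $(v_n,\pi_n)$ is a bounded solution of the linearized map, equivalently of \eqref{eq:variationalmoment}--\eqref{eq:linearrecurrence}. Lemma~\ref{lem:nobounded} then forces $\xi_n\equiv0$, contradicting $\xi_0\ne0$. So the two curves cross with distinct tangent lines. This is exactly transversality on the two-dimensional section, and the orbit is a transverse heteroclinic connection from $X$ to $Y$ for $F_0$.
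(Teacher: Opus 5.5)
Your proposal is correct and follows the same route as the paper. You take existence from \eqref{eq:maptails}, assume a common tangent line, propagate a nonzero vector by $DF_0$ in both directions, and use hyperbolicity near the saddles to make the tail iterates decay; a nonzero bounded variational sequence then contradicts Lemma~\ref{lem:nobounded}. The only difference is that the paper merely asserts the geometric decay in the local hyperbolic neighbourhoods, whereas you justify it explicitly with the cone and adapted-norm estimate.
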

\begin{proof}
Existence and the limiting fixed points follow from
\eqref{eq:maptails}. Suppose the two invariant curves at an orbit point
had the same tangent line, and choose a nonzero vector on that line.
Iterating the derivative forward produces a tangent to $W^s(Y)$;
iterating it backward produces a tangent to $W^u(X)$. Once the orbit
enters the respective local hyperbolic neighbourhoods, these tangent
vectors decay geometrically in the corresponding time direction. The
finitely many remaining iterates have bounded derivatives. We obtain a
nonzero variational sequence bounded on $\Z$, contradicting
Lemma~\ref{lem:nobounded}.
\end{proof}

Thus the contraction does two jobs: it constructs the orbit and excludes
a tangent perturbation that decays at both ends. No numerical angle
between the invariant curves is used.

\section{Persistence for the elliptic step model}\label{sec:ellipticpersistence}

We will use the following standard consequence of the local stable
manifold theorem. We state the exact version needed, since it explains
why the infinitely many returns of a heteroclinic do not require summing
infinitely many perturbation errors. The standard input is the $C^1$
continuous dependence of local stable and unstable curves of a hyperbolic
fixed point on the map; see, for example, \cite{HPS}.

\begin{lemma}[Persistence from a finite connecting segment]\label{lem:persistence}
Let $f_0$ be a smooth local diffeomorphism of a surface, with hyperbolic
saddle fixed points $X,Y$ and a transverse heteroclinic orbit between them.
Suppose the orbit together with its two limits has compact closure inside
the regular domain of the map. If smooth local diffeomorphisms $f_s$
converge to $f_0$ in $C^1$ on a neighbourhood of that closure, then, for
small $s$, the continued saddles have a nearby transverse heteroclinic
connection. Its tails converge exponentially to the saddles.
\end{lemma}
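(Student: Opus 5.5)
The plan is to reduce everything to a finite piece of the orbit together with the $C^1$ dependence of local invariant curves on the map. Let $(z_n)_{n\in\Z}$ be the transverse heteroclinic orbit of $f_0$ from $X$ to $Y$.

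\emph{Step 1: continue the saddles and their local curves.} Since $X$ and $Y$ are hyperbolic and $f_s\to f_0$ in $C^1$ near them, the implicit function theorem gives continued saddles $X_s,Y_s$, depending continuously on $s$ and still hyperbolic. Fix small neighbourhoods $U_X,U_Y$. By the local stable manifold theorem with parameters (as in \cite{HPS}), the local unstable curve $W^u_{\rm loc}(X_s)\subset U_X$ and the local stable curve $W^s_{\rm loc}(Y_s)\subset U_Y$ are $C^1$ embedded arcs depending continuously on $s$ in the $C^1$ topology. Moreover, every point of $W^s_{\rm loc}(Y_s)$ converges to $Y_s$ under forward iteration at a uniform geometric rate $\lambda<1$, and symmetrically for $W^u_{\rm loc}(X_s)$ under backward iteration.

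\emph{Step 2: reduce to a finite segment.} Because the orbit converges to $X$ and $Y$, I choose $N$ with $z_{-N}\in W^u_{\rm loc}(X)$ and $z_N\in W^s_{\rm loc}(Y)$, and with all earlier (resp. later) iterates staying in $U_X$ (resp. $U_Y$). The compact-closure hypothesis ensures that $f_s$ is defined on a neighbourhood of $\{z_{-N},\dots,z_N\}$ for small $s$. The composite $f_s^{2N}$ near $z_{-N}$ is then a composition of finitely many maps converging in $C^1$, so it converges in $C^1$ to $f_0^{2N}$. Therefore the arc $\Gamma^u_s:=f_s^{2N}(W^u_{\rm loc}(X_s)\cap B(z_{-N},r))$ converges in $C^1$ to $\Gamma^u_0$, which passes through $z_N$. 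Only finitely many iterates are compared, so no infinite sum of errors appears.

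\emph{Step 3: transverse intersection persists.} Transversality at $z_N$ means that $\Gamma^u_0$ and $W^s_{\rm loc}(Y)$ cross with distinct tangent lines. I write both arcs as $C^1$ graphs in local coordinates in which they are the two coordinate axes. A transverse crossing of two $C^1$ curves is stable under $C^1$-small perturbation: this follows from the implicit function theorem, or equivalently from a nondegenerate zero of the map $(t,t')\mapsto\gamma^u_s(t)-\gamma^s_s(t')$. Hence there is a unique nearby intersection point $\zeta_s$, and the tangent lines at $\zeta_s$ remain distinct. The orbit of $\zeta_s$ lies in $W^u(X_s)\cap W^s(Y_s)$. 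Its forward tail stays in $W^s_{\rm loc}(Y_s)$ and converges at rate $\lambda$. Its backward tail consists of the preimages under $f_s^{-2N}$, which land in $W^u_{\rm loc}(X_s)$ and then converge geometrically. This gives the exponential convergence of both tails.

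\emph{Main obstacle.} The delicate point is the parametrized local stable manifold theorem in $C^1$ under only $C^1$ convergence of local diffeomorphisms defined on a neighbourhood, rather than globally. I would handle this by extending $f_s$ near each saddle to a global diffeomorphism that is $C^1$-close to the linear part, for instance with a bump-function cutoff. The graph-transform contraction then has a Lipschitz dependence on the map, which gives $C^1$ closeness of the invariant graphs and uniform rates. A second point to check is that $f_s^{2N}$ is a local diffeomorphism near $z_{-N}$, which holds because each $f_s$ is.
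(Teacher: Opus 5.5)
Your proof is correct and takes essentially the same route as the paper. Both continue the saddles and their local invariant curves with $C^1$ dependence on the map, compare only finitely many iterates, use the $C^1$-stability of a transverse crossing, and take the exponential tails from the local invariant manifolds. The differences are cosmetic: you push the unstable arc forward by $2N$ iterates to meet the local stable curve at $z_N$, where the paper meets the two transported arcs at $z_0$, and you use a two-variable implicit-function argument where the paper writes both curves as graphs and uses a simple-zero/intermediate-value argument.
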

\begin{proof}
The saddles persist because $Df_0-I$ is invertible at each of them.
The local stable manifold theorem gives local invariant curves, with
uniform local contraction/expansion and $C^1$ continuous dependence on
$f_s$.

Fix a connecting point $z_0$. Choose $m_-,m_+<\infty$ so that
$f_0^{-m_-}(z_0)$ belongs to a small local unstable segment at $X$, and
$f_0^{m_+}(z_0)$ belongs to a small local stable segment at $Y$. Take
segments on which these two membership statements are interior. Their
images under $f_s^{m_-}$ and $f_s^{-m_+}$ are $C^1$ close to the two
original curves near $z_0$. Only these finitely many iterates are being
compared. The required local inverse branches also converge, by
invertibility and $C^1$ convergence.

Choose the vertical coordinate direction not parallel to either tangent line at
$z_0$. In the resulting chart the two curves can be written as
$y=u_s(x)$ and $y=v_s(x)$. At the limiting intersection,
$u_0-v_0$ has a simple zero. On a small fixed interval its derivative
has a fixed nonzero sign and its endpoint values have opposite signs.
Both properties persist under $C^1$ convergence. There is therefore a
unique nearby zero of $u_s-v_s$, and the intersection is transverse.

The intersection belongs to the global unstable manifold of the first
saddle and the global stable manifold of the second. Its infinite tails
are supplied by those invariant manifolds and stay in their local regular
neighbourhoods after finitely many iterates. This proves the assertion.
\end{proof}

\begin{proposition}[Elliptic connection]\label{prop:ellipticconnection}
For all sufficiently small $\eps>0$, the two-step elliptic model with
\begin{equation}\label{eq:epsstepfamily}
 \nu=1+\eps,\qquad \alpha=\frac12,\qquad q=\frac\eps{16}
\end{equation}
has a transverse heteroclinic connection from the long-axis periodic
orbit to the short-axis periodic orbit.  Its reduced return orbit has
exponential tails and stays in the branch with two inner transmissions
and one outer reflection per return, uniformly away from the singular
boundaries for this fixed $\eps$.
\end{proposition}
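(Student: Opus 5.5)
The plan is to apply Lemma~\ref{lem:persistence} with $f_0=F_0$, the limit map \eqref{eq:limitmap} for $\alpha=1/2$ and $\gamma=1/16$, and with $f_s=F_\eps$, the exact reduced elliptic return map on the prescribed itinerary. Here $\eps$ plays the role of $s$, and $q=\gamma\eps=\eps/16$, as in \eqref{eq:singularscaling}.

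The first step checks the hypotheses on $F_0$. Proposition~\ref{prop:limitconnection} provides a transverse heteroclinic orbit from $X$ to $Y$. The matrices \eqref{eq:matrices} show that both fixed points are hyperbolic saddles. By \eqref{eq:rhomargin} and \eqref{eq:maptails}, the closure $K$ of the orbit together with $X,Y$ satisfies $|p|\le\rho<1$ and $|p_+|\le\rho<1$. Hence $K$ is a compact subset of $\mathcal D_{\alpha,\gamma}$. I choose a compact neighbourhood $K'\subset\mathcal D_{\alpha,\gamma}$ of $K$. Proposition~\ref{prop:C1} then gives that $F_\eps$ is well defined on $K'$ for small $\eps$, and that $F_\eps\to F_0$ in $C^1$ there, with local angular lifts.

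One point needs care: Lemma~\ref{lem:persistence} requires smooth local diffeomorphisms. $F_0$ is one, since its determinant is one. For $\eps$ small, $F_\eps$ is a local diffeomorphism on $K'$ because $DF_\eps$ is $C^0$-close to $DF_0$, so its determinant is near one and bounded away from zero. The local inverse branches converge by the inverse function theorem.

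Next I identify the continued saddles. The axes are invariant for every $\eps$, so the fixed points of $F_\eps$ near $X$ and $Y$ are again $X=(0,0)$ and $Y=(\pi/2,0)$. These correspond to the long-axis and short-axis periodic orbits of the step model. This also follows from the uniqueness of the continued fixed point together with the symmetry $\theta\mapsto-\theta$, $p\mapsto-p$.

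Lemma~\ref{lem:persistence} then produces a transverse heteroclinic orbit of $F_\eps$ from $X$ to $Y$ with exponential tails. The orbit lies in a small neighbourhood of $K$, hence inside $K'$. On $K'$, Proposition~\ref{prop:C1} guarantees the itinerary of two inner transmissions and one outer reflection, with uniform margins from total internal reflection, grazing and extra collisions. This gives the branch statement for the fixed $\eps$.

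Finally I translate back to the flow. The reduced section is a transverse section of the energy surface $\cH=E/2$ of the step model. A transverse intersection of the section curves gives transverse intersection of $W^u(\Gamma_x)$ and $W^s(\Gamma_y)$ within the energy surface.

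The main obstacle is justifying that the $C^1$ convergence of Proposition~\ref{prop:C1} holds on a full neighbourhood of $K$, including the fixed points. Compactness of $K$ inside the open branch domain handles this. A second point is that the hyperbolicity and transversality notions are coordinate-independent, so the non-canonical section coordinates (Remark on section coordinates) cause no issue. I expect both to follow from the margins already established.
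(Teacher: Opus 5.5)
Your proposal is correct and follows the paper's proof essentially step by step. Both arguments use compactness of the limit orbit's closure in $\mathcal D_{\alpha,\gamma}$ via \eqref{eq:rhomargin}, $C^1$ convergence from Proposition~\ref{prop:C1} on a slightly larger compact neighbourhood, exact persistence of $X$ and $Y$ from the axial symmetries, and then Lemma~\ref{lem:persistence}. The only differences are minor: you explicitly check that $F_\eps$ is a local diffeomorphism, which the paper leaves tacit, while the paper explicitly lifts from the antipodal quotient to the axial two-cycles, a step your identification of $X,Y$ with the axial orbits covers only implicitly.
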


\begin{proof}
By \eqref{eq:rhomargin}, the orbit in
Proposition~\ref{prop:limitconnection}, together with $X$ and $Y$, is a
compact subset of \eqref{eq:regulardomain}.  Choose a slightly larger
compact neighbourhood still contained in that domain.  Proposition~\ref{prop:C1}
provides $C^1$ convergence there.  The axial states $X$ and $Y$ are fixed
points of the reduced exact map for every positive $\eps$, by the exact
coordinate symmetries.  Their derivative matrices converge to
\eqref{eq:matrices}, so they remain hyperbolic.

Lemma~\ref{lem:persistence} gives a transverse connection
near the limiting one.  Shrinking the neighbourhoods and $\eps$ if
necessary keeps its finite connecting portion on the regular branch.
Its tails stay in small regular neighbourhoods of the two fixed points.
This also supplies a uniform positive margin from critical transmission
and tangency for the closure of the resulting orbit.

Lifting from the antipodal quotient gives a connection between the
corresponding axial two-cycles of the original billiard.  This is the
claimed connection between the two periodic trajectories.
\end{proof}

The physical parameters \eqref{eq:physicalfamily} realize
\eqref{eq:epsstepfamily}, since
\[
 \sqrt E=\frac1{\sqrt2},\qquad
 \sqrt{E-h}=\frac\eps{16\sqrt2}.
\]
The proposition does not quantify the admissible threshold $\eps_0$.
In particular, a floating-point computation at $\eps=0.01$ is not a
certificate that this particular value lies below the threshold.

\begin{remark}[The time scale]
For each fixed positive $\eps$, the return times near the axial cycles
are finite and positive.  Thus exponential decay in the number of
returns implies exponential decay in physical time.  No positive decay
rate uniform as $\eps\to0$ is claimed: the annular speed tends to zero.
\end{remark}

\section{Differentiable regularization of a single layer}\label{sec:layers}

We now fix the elliptic and energetic parameters.  The smoothing argument
is independent of the construction of the step-model connection and is
stated for general $\nu>1$, $0<a<b$, and $0<h<E<1$.
The differentiable approximation of reflecting walls has an established
soft-billiard framework; see \cite{RRT}.  Since the present return map also
contains two transmissions, we prove the necessary layer statement
directly rather than importing a theorem for reflecting walls alone.

\subsection{An extended fast-time system}
Consider one level interface $\Psinu(R)=c$, where $c>0$, and a positive
jump $J$ of the primitive.  In the layer let
\begin{equation}\label{eq:singleprofile}
 M(s)=M_-+J\chi\left(\frac{s-c}{\delta}\right),
 \qquad \delta>0.
\end{equation}
The physical force in this layer is
$-J\chi'((\Psinu-c)/\delta)\nabla\Psinu/(2\delta)$.
Introduce
\[
 z=\frac{\Psinu(R)-c}{\delta},
 \qquad \tau=\frac{t-t_{\rm in}}\delta.
\]
With dots denoting derivatives in $\tau$, the equations become
\begin{equation}\label{eq:fastsystem}
 \boxed{
 \begin{aligned}
 \dot R&=\delta V,\\
 \dot V&=-\frac J2\chi'(z)\nabla\Psinu(R),\\
 \dot z&=\nabla\Psinu(R)\cdot V.
 \end{aligned}}
\end{equation}
Regard $(R,V,z)$ as independent coordinates of an extended system.  Its
vector field is smooth also at $\delta=0$.  The constraint
\begin{equation}\label{eq:constraint}
 \Psinu(R)-c-\delta z=0
\end{equation}
is invariant, since its derivative along \eqref{eq:fastsystem} vanishes.
Thus compatible initial data reproduce the physical system exactly for
$\delta>0$.  The large physical derivatives have been removed by a change
of variables, not estimated as a small perturbation.

For reference, the constraint calculation is
\[
 \frac{d}{d\tau}\bigl(\Psinu(R)-c-\delta z\bigr)
 =\delta\nabla\Psinu(R)\cdot V
       -\delta\nabla\Psinu(R)\cdot V=0.
\]
The extended energy $|V|^2+J\chi(z)$ is conserved as well. At $\delta=0$
the variable $z$ continues to evolve even though $R$ is frozen: it records
the progress through the collapsed layer. Treating it as an independent
variable is what makes the limiting problem regular.

\subsection{The limiting scattering law}
At $\delta=0$, the position is fixed, say $R=R_0$.  Write
\[
 G=|\nabla\Psinu(R_0)|>0,
 \qquad \nn=\frac{\nabla\Psinu(R_0)}G,
 \qquad w=V\cdot\nn.
\]
The tangential component $V_T$ is constant, and
\begin{equation}\label{eq:normalfast}
 \dot z=Gw,\qquad \dot w=-\frac{JG}{2}\chi'(z),
 \qquad w^2+J\chi(z)=\text{constant}.
\end{equation}
It follows that the limiting scattering laws are:
\begin{align}
 w_{\rm out}&=-\sqrt{w_{\rm in}^2+J}
 &&\text{for entry from $z=+1$ to $z=-1$, }w_{\rm in}<0,
 \label{eq:downward}\\
 w_{\rm out}&=\sqrt{w_{\rm in}^2-J}
 &&\text{for transmission from $z=-1$ to $z=+1$, }w_{\rm in}^2>J,
 \label{eq:upward}\\
 w_{\rm out}&=-w_{\rm in}
 &&\text{for reflection from $z=-1$ back to $z=-1$, }0<w_{\rm in}^2<J.
 \label{eq:reflection}
\end{align}
In \eqref{eq:upward} and \eqref{eq:reflection} the incident normal
velocity is positive.  In every case $V_T$ is unchanged.  Thus these are
exactly the refraction and specular-reflection laws of the step model.

For reflection, the turning point $z_*$ is determined by
\begin{equation}\label{eq:turning}
 \chi(z_*)=w_{\rm in}^2/J,\qquad -1<z_*<1.
\end{equation}
It is nondegenerate because $\chi'(z_*)>0$: at the turning point,
$\dot w=-JG\chi'(z_*)/2<0$, so the normal velocity actually changes sign.
The limiting round-trip time is
\begin{equation}\label{eq:reflectiontime}
 T_0=\frac2G\int_{-1}^{z_*}
       \frac{\dd z}{\sqrt{w_{\rm in}^2-J\chi(z)}}.
\end{equation}
The square-root singularity at $z_*$ is integrable.  On a compact family
of incident states separated from $w_{
\rm in}=0$ and $w_{\rm in}^2=J$, the turning points remain in a compact
subset of $(-1,1)$.

\begin{lemma}[A smooth layer approximates its ideal scattering map]\label{lem:layer}
Fix a compact family of incident states for one of
\eqref{eq:downward}--\eqref{eq:reflection}.  Assume that all incident and
outgoing normal velocities are nonzero and that the relevant strict
transmission or reflection inequalities have positive margins.
Identify the layer faces $\Psinu=c\pm\delta$ smoothly with
$\Psinu=c$.  Then the map $\mathcal L_\delta$ from entry to exit satisfies
\begin{equation}\label{eq:layerestimate}
 \mathcal L_\delta=\mathcal L_0+O_{C^r}(\delta)
\end{equation}
for every fixed finite $r$.  Here $\mathcal L_0$ is the corresponding
ideal scattering law.  The physical traversal time, including its
section-variable derivatives of each fixed order, is $O(\delta)$.
\end{lemma}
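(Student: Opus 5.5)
The approach is to regard the entry-to-exit map as a hitting map of the extended fast-time system \eqref{eq:fastsystem}. That vector field is smooth in $(R,V,z,\delta)$ including $\delta=0$, so the whole statement reduces to smooth dependence of flows and transverse hitting times on parameters.

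Concretely, incident data on the face $\Psinu=c+\delta$ (or $c-\delta$) are encoded as $(R,V,z=\pm1)$, with $R$ obtained from a point of the ideal interface $\Psinu=c$ through the smooth identification mentioned in the statement. The identification can be taken as the normal shift along $\nabla\Psinu$ solving $\Psinu=c\pm\delta$ by the implicit function theorem; it is smooth in $\delta$ and equals the identity at $\delta=0$. We then flow \eqref{eq:fastsystem} until $z$ reaches the exit value: $-1$ for \eqref{eq:downward}, $+1$ for \eqref{eq:upward}, $-1$ again for \eqref{eq:reflection}. Since the initial data satisfy the constraint \eqref{eq:constraint}, invariance of the constraint guarantees that this is the physical trajectory for $\delta>0$. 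Mapping back through the identification gives $\mathcal L_\delta$.

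The key steps, in order, are the following.
\begin{enumerate}
\item At $\delta=0$, the reduced dynamics \eqref{eq:normalfast} shows that the exit time $T_0$ is finite on the compact family. In the transmission cases $w$ never vanishes, because $w^2\ge\min(w_{\rm in}^2,w_{\rm in}^2-J)>0$ uniformly. In the reflection case there is exactly one nondegenerate turning point \eqref{eq:turning}, and afterwards $z$ returns to $-1$.
\item At the exit time, $\dot z=Gw_{\rm out}\neq0$ with a uniform margin. So $z=\pm1$ is crossed transversally, and the implicit function theorem applied to $z(\tau;\xi,\delta)=\pm1$ gives an exit time $T_\delta(\xi)$ depending smoothly on $(\xi,\delta)$ near $\delta=0$.
\item Since flows on compact time intervals depend $C^\infty$ on parameters, the composite map is smooth in $(\xi,\delta)$ on a neighbourhood of the compact family. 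Taylor's formula in $\delta$, with the integral form used in the proof of Proposition~\ref{prop:C1}, then gives \eqref{eq:layerestimate} with uniform $C^r$ remainders.
\item The map $\mathcal L_0$ coincides with the ideal law. Indeed $R$ is frozen, $V_T$ is constant because $\dot V$ is parallel to $\nn$, and energy conservation $w^2+J\chi(z)$ yields \eqref{eq:downward}--\eqref{eq:reflection}, with the sign of $w_{\rm out}$ fixed by the direction of crossing.
\item The physical time is $\delta T_\delta(\xi)$, where $T_\delta$ is smooth. Hence it and all its section derivatives are $O(\delta)$.
\end{enumerate}

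The one delicate point I expect is the reflection case. There the trajectory does not cross a level monotonically in $z$, so one cannot parametrize by $z$ and use the explicit integral \eqref{eq:reflectiontime}, whose integrand is singular at $z_*$. My plan avoids this by never differentiating that integral: it uses the flow in $\tau$ together with transversality only at the exit crossing, and the turning point being nondegenerate ($\chi'(z_*)>0$) is what guarantees that nearby orbits also exit after a single turn.

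Finally, one must check that no spurious earlier crossing of $z=-1$ or $z=+1$ occurs for small $\delta$. This follows from the uniform margins and $C^0$ closeness of the trajectories on the compact time interval $[0,\max T_0+1]$.
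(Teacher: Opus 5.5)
Your proposal is correct and is essentially the paper's proof. Both use the extended fast-time system smooth through $\delta=0$, finiteness of the limiting passage, an exit time obtained from the implicit function theorem via $\dot z=Gw_{\rm out}\neq0$, persistence of margins to rule out earlier exits, Taylor's formula in $\delta$, and physical time $\delta T_\delta$; the paper only parametrizes the faces explicitly by $\sqrt{c+\delta z_{\rm in}}\,(\cos\theta,\sin\theta/\sqrt\nu)$ instead of your normal shift. One small precision: $C^0$ closeness alone does not exclude spurious crossings near $\tau=0$ in the reflection case or near the exit, where you also need the persistent sign of $\dot z$ (the paper's ``these gaps and signs persist'').
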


\begin{proof}
\emph{Finite limiting passages.}
For transmission, the normal energy in \eqref{eq:normalfast} bounds
$|w|$ away from zero throughout the passage; the limiting exit time is
finite. For reflection, finiteness follows from
\eqref{eq:reflectiontime}: near $z_*$ the radicand is comparable to
$z_*-z$, whose inverse square root is integrable. The lower bound on
incident normal speed and the upper margin from $w_{\rm in}^2=J$ keep
$z_*$ in a compact subinterval of $(-1,1)$.

\emph{Smooth exit time.}
Parametrize the incident positions by
\[
 R_{\rm in}(\theta,\delta)=
 \sqrt{c+\delta z_{\rm in}}
       \left(\cos\theta,\frac{\sin\theta}{\sqrt\nu}\right),
 \qquad z_{\rm in}\in\{-1,1\}.
\]
Together with regular velocity coordinates, this gives incident data
smooth in the section variables and in $\delta$. The solution of the
extended system depends smoothly on these data near any of the finite
limiting passages. At exit,
\[
 \dot z(T_0)=G w_{\rm out}\ne0.
\]
The equation $z(\tau)=z_{\rm out}$ therefore determines an exit time
$T_\delta$ smooth in the data and in $\delta$, by the implicit function
theorem. In reflection we use the positive root near the limiting
round-trip time $T_0$, not the root at the initial time.

The selected crossing is the physical exit. For transmission the limiting
path is strictly monotone in $z$, with transverse entrance and exit. For
reflection it stays strictly inside the layer between entrance and the
positive return, has one nondegenerate turning point, and is transverse
at both endpoints. On compact time intervals away from these events there
is a strict gap from the exit face. These gaps and signs persist for small
$\delta$, excluding an earlier unwanted exit. A finite cover of the
compact family of incident states makes the choices uniform.

\emph{The derivative is a section derivative.}
Let $Z=(R,V,z)$, let $\mathcal X_\delta$ be the extended vector field,
and let $Y$ be the derivative of the fixed-time solution with respect to
an incident parameter $a$. Differentiating
$z(T_\delta(a),a)=z_{\rm out}$ gives
\begin{equation}\label{eq:hittingderivative}
 D_aT_\delta=-\frac{Y_z}{(\mathcal X_\delta)_z},
 \qquad
 D_aZ_{\rm exit}
 =Y-\mathcal X_\delta\frac{Y_z}{(\mathcal X_\delta)_z}.
\end{equation}
All quantities are evaluated at exit. The denominator is bounded away
from zero on the compact family. This formula displays the hitting-time
correction that is absent from a derivative of the flow at a fixed time.
Higher derivatives follow from the same smooth implicit function theorem.

Thus the exit state and its section-variable derivatives are smooth in
$\delta$ through zero. Taylor's formula proves
\eqref{eq:layerestimate}, after projecting to $(R,V)$ and making the
smooth face identifications. The physical passage time is
$\delta T_\delta$, hence is $O_{C^r}(\delta)$. In particular, we never
need to differentiate the improper integral
\eqref{eq:reflectiontime} at its moving endpoint.
\end{proof}

\begin{remark}[Nonuniformity]
The constants in Lemma~\ref{lem:layer} depend on the fixed geometry,
speeds, and margins.  In our application they may become very large as
$\eps\to0$, because the annular normal velocities become small.  The
proof never asks for uniformity in that limit.
\end{remark}

\section{From the elliptic connection to a smooth positive coefficient}\label{sec:realization}

\subsection{Smoothing the two steps}
Fix one sufficiently small positive $\eps$ for which
Proposition~\ref{prop:ellipticconnection} holds.  Thus $\nu,a,b,E,h,q$
are now fixed.  Define
\begin{equation}\label{eq:smoothedM}
 M_\delta(s)=h\chi\left(\frac{s-a^2}{\delta}\right)
 +(1-h)\chi\left(\frac{s-b^2}{\delta}\right),
 \qquad m_\delta=M_\delta'.
\end{equation}
Take $\delta$ sufficiently small that both layers are disjoint and
contained in $s>0$.  This potential agrees exactly with the plateau
values away from the layers.  The resulting equation is already a
$C^\infty$ Hamiltonian system, including at the origin, since
$\Psinu$ is a polynomial.  At this stage $m_\delta$ is nonnegative and
vanishes on the plateaux.

Choose a fixed section in the annulus,
\begin{equation}\label{eq:fixedsection}
 \Psinu(R)=c_*,\qquad
 c_*:=\frac{a^2+b^2}{2},\qquad
 \nabla\Psinu(R)\cdot V<0,
\end{equation}
on the energy surface $\cH=E/2$.  For small enough $\delta$, it stays
outside the layers and $M_\delta(c_*)=h$.  Hence the section speed is
exactly $\sqrt{E-h}$, independently of $\delta$.  The step-model return
map on this section is conjugate by a regular free-flight map to the
inner-interface map used earlier.

Along each inward free leg of the step orbit, every intermediate level is
crossed once before the inner interface. The crossing is transverse.
Therefore moving the section changes the return map by a smooth local
conjugacy, preserving hyperbolicity and transversality. In the regularized
problem this fixed section avoids making the comparison itself depend on
the changing thickness of the layers.

\begin{proposition}[Regularization of a regular transverse connection]\label{prop:regularization}
Suppose the two-step elliptic model has a transverse connection between
two hyperbolic axial cycles and that its closure on a regular section
stays on the branch with two inner transmissions and one outer
reflection, with positive margins from tangencies and critical
transmission.  Then the Hamiltonian defined by \eqref{eq:smoothedM}
has a nearby transverse connection between hyperbolic axial periodic
orbits for every sufficiently small positive $\delta$.
\end{proposition}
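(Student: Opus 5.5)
The argument reduces everything to Lemma~\ref{lem:persistence}: we show that the smoothed return maps converge in $C^1$ to the step-model return map. The work is done on the fixed section \eqref{eq:fixedsection}. Let $F^{\rm step}$ be the step-model return map there, and let $F_\delta$ be the return map of the Hamiltonian \eqref{eq:smoothedM} on the same section and energy surface. By hypothesis, the closure $K$ of the step connection, including the two axial fixed points of the antipodally reduced section, is compact. It also lies in the regular branch, with positive margins. Choose a compact neighbourhood $K'$ of $K$ on which these margins still hold. The claim to be proved is
\[
 F_\delta=F^{\rm step}+O_{C^1}(\delta)\qquad\text{on }K'.
\]
Given this, Lemma~\ref{lem:persistence} yields a nearby transverse connection for $F_\delta$. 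Transversality on the section, together with energy conservation, then gives transversality on the energy surface.

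\textbf{Step 1: decomposition of the return.} For $(R,V)\in K'$, one return of the step model is a composition of smooth maps:
\begin{itemize}
 \item free flight from the section to the inner interface;
 \item downward refraction \eqref{eq:downward} with $J=h$;
 \item flight along the inner chord;
 \item upward transmission \eqref{eq:upward};
 \item flight to the outer ellipse;
 \item reflection \eqref{eq:reflection} with $J=1-h$;
 \item flight back to the section.
\end{itemize}
For $\delta>0$ we use the same decomposition. Each free flight now runs between the layer faces $\Psinu=a^2\pm\delta$ or $b^2\pm\delta$, where the potential is exactly constant, so the motion there is genuinely straight. Each layer passage is the map $\mathcal L_\delta$ of Lemma~\ref{lem:layer}.

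\textbf{Step 2: convergence of each piece.}
\begin{itemize}
 \item \emph{Layers.} The margins in the hypothesis are exactly the hypotheses of Lemma~\ref{lem:layer} on compact families of incident states. These are: nonzero normal velocities, strict transmission inequalities ($w_{\rm in}^2>h$ for upward transmission), and strict reflection inequalities ($0<w_{\rm in}^2<1-h$ at the outer wall). The lemma gives $\mathcal L_\delta=\mathcal L_0+O_{C^r}(\delta)$ after the smooth face identifications. Since $F_\delta$ is continuous in $\delta$, the incident states met by $F_\delta$ stay in slightly enlarged compact families, so the margins persist.
 \item \emph{Free flights.} Each flight ends at a level set that is crossed transversally. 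The hitting time is therefore obtained by the implicit function theorem, from the quadratic equation used in the proof of Proposition~\ref{prop:C1}. It is smooth in the data and in the target level $c\pm\delta$.
 \item \emph{Face identifications.} These are smooth in $\delta$ and reduce to the identity at $\delta=0$, so they contribute only $O_{C^r}(\delta)$ discrepancies.
\end{itemize}
Composing finitely many maps, each $C^1$-close to its limit on compact sets, gives the claimed $C^1$ convergence.

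\textbf{Step 3: the axial orbits.} The axial states remain fixed points of $F_\delta$ for every $\delta$, by the exact reflection symmetries $x\mapsto-x$ and $y\mapsto-y$, since the coordinate axes are invariant. Their derivatives converge, so they stay hyperbolic. Exponential tails in the return index become exponential in time because return times are bounded above and below on $K'$.

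\textbf{Main obstacle.} The delicate point is showing that the itinerary of $F_\delta$ coincides with that of the step model: no extra grazing, no premature exit from a layer, and no unintended crossing. Lemma~\ref{lem:layer} controls the selection of the exit inside a layer. For the flights, I would reuse the argument from Proposition~\ref{prop:C1}. The selected roots are simple with uniform radial-velocity margins, while all unselected intersections with the level sets $a^2\pm\delta$ and $b^2\pm\delta$ stay a positive distance away. This uniform separation on $K'$ excludes spurious events for small $\delta$.
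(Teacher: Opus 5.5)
Your proposal is correct and follows essentially the same route as the paper: you work on the fixed annular section and decompose one return into straight free flights between layer faces plus the three layer passages controlled by Lemma~\ref{lem:layer}, deduce $C^1$ convergence of the return maps, and conclude with the finite-iterate persistence Lemma~\ref{lem:persistence}, with the axial symmetry keeping the continued periodic orbits on the axes. The only difference is that you take the layer-lemma margins from the hypothesis, whereas the paper checks them explicitly from the energy identity, e.g.\ $(E-h)(1-w^2)>0$ at the inner exit and $w_{\rm in}^2\le E-h<1-h$ at the outer wall since $E<1$.
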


\begin{proof}
On a compact neighbourhood of the step-model connection, the return map
on \eqref{eq:fixedsection} is a finite composition of free flights,
entry transmission, exit transmission, and outer reflection.  The free
flights between the section and the layer faces depend smoothly on
$\delta$ and on their initial data, because their intersections are
transverse.  Lemma~\ref{lem:layer} applies to each of the three layer
passages.  For the outer passage the jump is $J=1-h$, and
\[
 0<w_{\rm in}^2\le E-h<1-h=J.
\]
The lower bound has a positive margin on the compact regular set.  The
upper strict inequality follows from $E<1$.  For the inner entry, the squared incident normal speed is
\[
 (E-h)(1-p^2)>0.
\]
For the inner exit, if $w$ denotes the outgoing annular tangential
fraction, the squared normal speed just before transmission is
$E-(E-h)w^2$. The transmission margin is exactly
\[
 \bigl[E-(E-h)w^2\bigr]-h=(E-h)(1-w^2)>0.
\]
The compact bounds $|p|,|w|<1$ make both margins uniform for the fixed
$\eps$. At the outer layer, the gap from the upper reflection threshold
is at least $1-E>0$. This verifies each hypothesis of the layer lemma,
not merely the absence of tangencies.

Consequently the reduced maps, or equivalently their unreduced iterates,
satisfy
\begin{equation}\label{eq:deltaC1}
 \mathcal P_{\eps,\delta}
   =\mathcal P_{\eps,0}+O_{C^1}(\delta)
\end{equation}
on the compact neighbourhood needed for persistence.  The hyperbolic
cycles and their local invariant manifolds continue, and the finite-iterate
argument in Section~\ref{sec:ellipticpersistence} preserves the transverse
intersection.  The symmetries of $M_\delta(\Psinu)$ preserve the coordinate
axes.  The nearby periodic orbits are therefore the axial ones.

One can also identify them directly. The outer turning value $s_*$ is
the unique solution of $M_\delta(s_*)=E$ in the outer layer, where
$M_\delta'(s_*)>0$. The axial trajectories turn at
$x=\pm\sqrt{s_*}$ and $y=\pm\sqrt{s_*/\nu}$, respectively. They are
periodic on the prescribed energy surface and give the fixed points of
the reduced return map on the invariant axes. Thus the continuation is
not merely to unspecified periodic orbits lying near the axes.
\end{proof}

\subsection{Removing degeneracy after smoothing}
Fix a positive $\delta$ supplied by
Proposition~\ref{prop:regularization}.  Now set
\begin{equation}\label{eq:eta}
 M_{\delta,\eta}(s)=M_\delta(s)+\eta s,
 \qquad m_{\delta,\eta}(s)=m_\delta(s)+\eta,
 \qquad \eta>0.
\end{equation}
For fixed $\eps,\delta$, this is an ordinary smooth perturbation:
\[
 -\nabla\bigl(\tfrac12 M_{\delta,\eta}(\Psinu)\bigr)
 +\nabla\bigl(\tfrac12 M_\delta(\Psinu)\bigr)
 =-\eta D_\nu R.
\]
The vector fields converge in every $C^r$ norm on compact subsets as
$\eta\to0$.  On the fixed position section \eqref{eq:fixedsection}, the
energy constraint now gives speed
\[
 \sqrt{E-h-\eta c_*},
\]
which remains positive if $\eta<(E-h)/c_*$.  It identifies the perturbed
energy sections smoothly with the original one.  The return maps are
therefore $C^1$ close for sufficiently small $\eta$.

Hyperbolicity, the local invariant manifolds, and their transverse
intersection persist once more.  The axial symmetry remains exact.
The new coefficient satisfies $m_{\delta,\eta}\ge\eta>0$ on all of
$[0,\infty)$; it is constant outside the two compact layer intervals,
so it and all its derivatives are bounded for the fixed parameter choice.
The primitive is \eqref{eq:Mfinal}, with $M(0)=0$.

This step uses the quadratic nature of $\Psinu$ essentially for its
simple regularity statement at the origin.  There is no loss of
smoothness when adding $\eta\Psinu/2$ to the Hamiltonian.

\subsection{Exponential decay in physical time}
There is one final distinction: the contraction and persistence arguments
concern return indices, whereas the road map requires estimates in
physical time. We now make that conversion explicit, with
$(\eps,\delta,\eta)$ fixed.

Let $z_n$ be the section states along the stable tail and let $Y_\eta$
be the axial fixed point in the reduced section. Local hyperbolicity gives
\[
 |z_n-Y_\eta|\le C\rho^n,\qquad 0<\rho<1.
\]
The return-time function is smooth near $Y_\eta$ and has a strictly
positive finite value there. Consequently the successive return times
$t_n$ obey
\[
 0<T_-\le t_{n+1}-t_n\le T_+<\infty
\]
for all sufficiently large $n$. The full flow is smooth with bounded
first derivatives on these compact pieces, so for $t\in[t_n,t_{n+1}]$
the inactive component satisfies
\[
 |x(t)|+|x'(t)|\le C'\rho^n.
\]
Indeed, compare at the same elapsed time with the axial trajectory
starting at the limiting section point; its $x,x'$ coordinates vanish
identically. Since $t-t_0\le(n+1)T_+$, this is an exponential estimate in
$t$. The unstable tail is treated in reversed time and controls $y,y'$.
Antipodal signs disappear in the squared component estimates.

Squaring and, if necessary, reducing the decay exponent gives
\eqref{eq:hca-decay}. Enlarging $B_0$ covers the finite middle part of the
orbit. No choice of a common asymptotic phase for the two different
periodic orbits is required: only their inactive components enter those
estimates. The return-time lower bounds also exclude accumulation of
infinitely many returns in finite physical time.

The orbit is nontrivial because its energy is $1/4$. Time reversal
$(R,V,t)\mapsto(R,-V,-t)$ supplies the reverse connection, preserving
transversality. This completes the proof of Theorem~\ref{thm:main}.

\begin{remark}[Order of choices and scope]
The quantifiers in Theorem~\ref{thm:main} are essential:
\[
 0<\eps<\eps_0,
 \qquad 0<\delta<\delta_0(\eps),
 \qquad 0<\eta<\eta_0(\eps,\delta).
\]
No uniform thickness estimate as $\eps\to0$ is used.  No extra step or
angular correction is added.  After adding $\eta s$, the intermediate
regions are no longer literally flat potential plateaux; this is the
intended strictly positive background.  The speed ratio $q=\eps/16$
parametrizes the reference step model, not a spatially constant annular
speed for the final perturbed potential.
\end{remark}

\section{The forced infinite-dimensional consequence}\label{sec:road}

Let $H=\ell^2$ with orthonormal basis $(e_k)_{k\ge0}$ and set
\begin{equation}\label{eq:operator}
 \lambda=\sqrt{1+\eps}>1,\qquad Ae_k=\lambda^{2k}e_k.
\end{equation}
For $r,s>0$ define the Gevrey space by
\begin{equation}\label{eq:gevrey}
 \mathcal G_{r,s}(A)=
 \left\{z\in H:\sum_{k\ge0}|\langle z,e_k\rangle|^2
           e^{r\lambda^{k/s}}<\infty\right\}.
\end{equation}
This fixes explicitly the order of the two parameters in our notation.

\begin{corollary}[A forced Kirchhoff blow-up example]\label{cor:blowup}
For a profile $m$ supplied by Theorem~\ref{thm:main} and the operator
\eqref{eq:operator}, there exist $T_\infty<\infty$, a forcing term
\[
 f\in C^0([0,\infty);\mathcal G_{r,s}(A))
           \qquad(s>1,\ r>0),
\]
and a solution of
\begin{equation}\label{eq:forced}
 u''+m(\|A^{1/2}u\|^2)Au=f(t)
\end{equation}
on $[0,T_\infty)$ such that, for every $\alpha>0$,
\begin{equation}\label{eq:blowup}
 \limsup_{t\uparrow T_\infty}
 \bigl(\|A^\alpha u'(t)\|^2+
           \|A^{\alpha+1/2}u(t)\|^2\bigr)=\infty,
\end{equation}
and $u'(t)$ has no limit in $H$ as $t\uparrow T_\infty$.
The solution is of class $C^2$ in every fixed Gevrey space on
$[0,T_\infty)$, including orders $s<1$.
\end{corollary}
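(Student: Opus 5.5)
The corollary will follow by feeding Theorem~\ref{thm:main} into the road-map theorem of \cite{GGroad}. No new dynamics is needed. The work is to check that the theorem's output matches the hypotheses of \cite[Definition~2.1]{GGroad} exactly, and then to read off the conclusions.

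The first step is to identify the two-mode system \eqref{eq:kids} with the restriction of \eqref{eq:forced} to a pair of consecutive modes. Put $u=x\,e_k+\lambda^{-1}y\,e_{k+1}$. Then $A e_k=\lambda^{2k}e_k$ and $\lambda^2=\nu$. After the time rescaling $t\mapsto\lambda^k t$ and the amplitude rescaling by $\lambda^{-k}$, we get $\|A^{1/2}u\|^2=x^2+\nu y^2$ and the equations become exactly \eqref{eq:kids}. This is the scale invariance behind the choice $\lambda=\sqrt\nu$.

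Next, Theorem~\ref{thm:main} supplies the required ingredients:
\begin{itemize}
\item a smooth profile $m$ with $\inf m\ge\eta>0$;
\item two hyperbolic simple modes on the fixed energy surface $\cH=1/4$;
\item a transverse heteroclinic orbit from $\Gamma_x$ to $\Gamma_y$, with the exponential tail estimates \eqref{eq:hca-decay}.
\end{itemize}
In each tail only the inactive component is controlled, and this is the format of the estimates in the road-map definition. I would verify three points against that definition: the energy normalization, the weight $\nu$ in the $y$-estimate, and whether the definition requires the connection in one direction only or in both. If both directions are required, the reverse connection from the last sentence of the theorem covers it.

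The road-map theorem then concatenates rescaled copies of the heteroclinic transition. It passes the energy from $e_k$ to $e_{k+1}$ for $k=0,1,2,\dots$. The $k$-th transition lasts a time of order $\lambda^{-k}$ up to the tail corrections, so the total time $T_\infty$ is finite. The forcing $f$ only glues consecutive pieces, and the exponential tails make the gluing errors small. The forcing regularity is where the threshold appears. The $k$-th correction has size roughly $e^{-A_0 T_k}$, where $T_k$ is the time spent near a mode. Balancing this against the Gevrey weight $e^{r\lambda^{k/s}}$ is possible exactly when $s>1$, and \cite{GGroad} carries out that computation once \eqref{eq:hca-decay} holds.

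The blow-up statements \eqref{eq:blowup} then follow. Each $e_k$ carries energy of order $\lambda^{2k}$ in the rescaled norm, so $\|A^\alpha u'\|^2+\|A^{\alpha+1/2}u\|^2$ grows like $\lambda^{4\alpha k}$ along the sequence of transitions. Likewise $u'$ moves a fixed amount in $H$ from one mode to the next, so it cannot converge. The $C^2$ regularity in every Gevrey space on $[0,T_\infty)$, including orders $s<1$, holds because on each compact subinterval only finitely many modes are active.

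I expect the main obstacle to be bookkeeping rather than new mathematics. The normalizations must match precisely: energy $E/2$ versus unit energy, the $\nu$ in the decay estimate \eqref{eq:hca-decay}, and the Gevrey parametrization \eqref{eq:gevrey} against the one used in \cite{GGroad}. Confirming these against \cite[Definition~2.1]{GGroad} is the first thing I would do.
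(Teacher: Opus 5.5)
Your route is the paper's own: match \eqref{eq:kids} with the planar system of \cite[Definition~2.1]{GGroad} via $v=x$, $w=y$, $\lambda=\sqrt\nu$, and check three hypotheses. These are nontriviality from the positive energy, the tail bounds \eqref{eq:hca-decay}, and $m\in C^1$ with $m\ge\eta>0$; then invoke \cite[Theorem~2.3]{GGroad}, and the heuristic account of the concatenation is not needed. One slip in your optional embedding: with $u(t)=\lambda^{-k}\bigl(x(\lambda^kt)e_k+c\,y(\lambda^kt)e_{k+1}\bigr)$ one gets $\|A^{1/2}u\|^2=x^2+\nu c^2y^2$, so your factor $c=\lambda^{-1}$ yields $x^2+y^2$, and you must take $c=1$.
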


\begin{proof}
In the notation of \cite[Definition~2.1]{GGroad}, take
\[
 v=x,\qquad w=y,\qquad \lambda=\sqrt\nu.
\]
The two differential equations agree exactly, because
$\lambda^2=\nu$. The nontriviality condition follows from the positive
energy. The two exponential bounds in that definition are precisely
\eqref{eq:hca-decay}: $v,v'$ decay in the future, and $w,w'$ decay in
the past. Finally $m\in C^1$ and $m\ge\eta>0$. Thus every assumption of
\cite[Theorem~2.3]{GGroad} is satisfied, and the stated corollary follows.
The numbering and the $\limsup$ formulation refer to the cited arXiv
version; our definition \eqref{eq:gevrey} fixes the order of the Gevrey
parameters explicitly.
\end{proof}

The role of the forcing in this implication is worth recalling. The
heteroclinic itself takes infinite time to reach either simple mode. In
the road-map construction, long pieces of its exponentially small tails
are replaced by exact simple modes using cutoffs; the residual of the
equation becomes a small forcing term. Rescaling such finite bridges to
successive frequencies allows their durations to be summable. This is
why the conclusion is a forced infinite-dimensional example, even though
the dynamical input is an unforced planar connection. The quantitative
concatenation and forcing estimates are those of \cite{GGroad} and are
not re-proved here.

\begin{remark}
The construction in \cite{GGroad} starts from a single-mode initial datum.
Its time-regularity refinement \cite[Remark~2.4]{GGroad} also applies to
the smooth coefficient constructed here.  The corollary is a consequence
of that road map; the infinite concatenation is not re-proved in this
paper.  The result does not remove the forcing term and does not assert
blow-up for an arbitrary preassigned operator or bounded spatial domain.
\end{remark}

\begin{remark}[A possible periodic realization]\label{rem:torus}
The operator in Corollary~\ref{cor:blowup} is chosen so that its selected
frequencies form an exact geometric progression.  This exact self-similarity
appears to be a convenience rather than a structural requirement.  Indeed,
once the smooth coefficient $m$ has been fixed, transversality implies that
the planar heteroclinic connection persists for frequency ratios $\lambda$
in a small interval around the value constructed above, with uniform
hyperbolicity after restricting to a compact subinterval.  A corresponding
version of the road-map argument with successive frequency ratios in that
interval would allow one to choose integer frequencies $n_k$ with
$n_{k+1}/n_k$ converging to the reference ratio, and hence to work with the
spectrum $n^2$ of the one-dimensional periodic Laplacian.  We do not pursue
or use this extension here.
\end{remark}

\section{What the argument does and does not use}\label{sec:scope}

The argument separates four logically independent steps.  The
nearly circular asymptotics give the explicit map
\eqref{eq:limitmap}.  The contraction
Lemma~\ref{lem:contraction} gives a bi-infinite orbit, and its linearized
contraction yields transversality.  Proposition~\ref{prop:C1} transfers
that connection to ellipses with small but positive eccentricity.
Finally Lemma~\ref{lem:layer} treats transmission and reflection within
the same smooth fast-time system, allowing the successive perturbations
in $\delta$ and $\eta$.

The exact value $1/\sqrt{63}$ proves contraction without numerical
verification.  Likewise, $d_*<2\pi/3$ provides the strict transmission
margin without evaluating $\rho$ numerically.  Neither $\eps_0$ nor
$\delta_0$ or $\eta_0$ has been numerically certified.  The particular
fixed-frequency example $\nu=2$, $\alpha=1/2$, $q=1/4$ from the earlier
double-hyperbolicity calculation is not the example established here.
The present family instead has $\nu\downarrow1$ and $q=\eps/16$.

The final planar Hamiltonian is smooth and coercive: since
$m\ge\eta>0$, its conserved energy bounds both position and velocity.
The heteroclinic is therefore a global bounded planar trajectory, not a
finite-dimensional blow-up. The blow-up in Corollary~\ref{cor:blowup}
occurs only after applying the forced infinite-dimensional construction.

The proof uses the contraction principle, smooth dependence for ordinary
differential equations and transverse hitting times, and the local stable
manifold theorem with its $C^1$ persistence consequence. The soft-layer
estimate itself is derived here, including both transmissions; it is not
an extension of a reflecting-wall theorem.  No quantitative lower bound
for the admissible values of $\eps$, $\delta$, or $\eta$ is needed for the
existence result.

\subsubsection*{\centering Acknowledgments}

We would like to express our deepest gratitude to Alberto Arosio, who played a decisive role in bringing the Kirchhoff equation to the attention of the Italian mathematical community in the early 1980s. His genuine enthusiasm for this equation was contagious for us already when we were young students. Since then, the Kirchhoff equation has repeatedly returned to the center of our research activity and of our mathematical thoughts. The ideas that eventually led to the counterexample have matured, in different forms, over almost thirty years. In this sense, this paper owes much to the curiosity and enthusiasm that Alberto transmitted to us at the very beginning.

\end{document}